\numberwithin{equation}{section}
\newtheorem{thm}{Theorem}[section]
\newtheorem{crl}[thm]{Corollary}
\newtheorem{lmm}[thm]{Lemma}
\newtheorem{prp}[thm]{Proposition}
\theoremstyle{definition}
\newtheorem{dfn}[thm]{Definition}
\newtheorem{ex}[thm]{Example}
\theoremstyle{remark}
\newtheorem*{rmk}{Remark}
\DeclareMathOperator{\Diff}{\bf Diff}
\DeclareMathOperator{\Top}{\bf Top}
\DeclareMathOperator{\NumG}{\bf NG}
\DeclareMathOperator{\STop}{\bf STop}
\DeclareMathOperator{\smap}{\bf smap}
\DeclareMathOperator{\Cinfty}{{\mathit C}^{\infty}}
\DeclareMathOperator{\Sd}{Sd}
\DeclareMathOperator{\Int}{Int}
\DeclareMathOperator{\Loops}{Loops}
\DeclareMathOperator*{\colim}{colim}
\newcommand{\R}{\mathbf{R}}
\newcommand{\I}{\mathcal{I}}
\newcommand{\J}{\mathcal{J}}
\newcommand{\K}{\mathcal{K}}
\newcommand{\semicolon}{\,;}
\newcommand{\bI}{\partial I}
\newcommand{\tI}{\tilde{I}}
\newcommand{\abs}[1]{\lvert{#1}\rvert}
\newcommand{\rel}[1]{\ \mbox{\rm rel}\ #1}
\title{A model structure on the category of diffeological spaces}
\author[T. Haraguchi]{Tadayuki Haraguchi} %
\address{Faculty of Education for Human Growth \\ Nara Gakuen
  University \\ Nara 636-8503 \\ Japan} %
\email{t-haraguchi@naragakuen-u.jp} %
\author[K. Shimakawa]{Kazuhisa Shimakawa} %
\address{Graduate School of Natural Science and Technology \\ Okayama
  University \\ Okayama 700-8530 \\ Japan} %
\email{kazu@math.okayama-u.ac.jp} \date{\today} %
\subjclass[2010]{Primary 18G55 ; Secondary 18B30, 55Q05} %
\thanks{This work was supported by JSPS KAKENHI Grant Number
  JP18K03279.}
\begin{document}
\maketitle
\begin{abstract}
  We construct on the category of diffeological spaces a Quillen model
  structure having smooth weak homotopy equivalences as the class of
  weak equivalences.
  It is shown that our model structure on the category of
  diffeological spaces is Quillen equivalent to the standard Quillen
  model structure on the category of topological spaces, with weak
  homotopy equivalences as the class of weak equivalences.
\end{abstract}

\section{Introduction}
The theory of model category was developed by Quillen in \cite{QuiH}
and \cite{QuiR}.  By definition, a model category is just a category
with three specified classes of morphism, called fibrations,
cofibrations and weak equivalences, which satisfy several axioms that
are deliberately reminiscent of typical properties appearing in
homotopy theory of topological spaces.  It is shown in \cite[8.3]{Spa}
that the category $\bf Top$ of topological spaces has, so called, the
Quillen model structure, under which a map $f \colon X \to Y$ is
defined to be

\begin{enumerate}
\item a weak equivalence if $f$ is a weak homotopy equivalence
  \cite[p.404]{Spanier},
\item a fibration if $f$ is a Serre fibration {\cite{Serre}}, and
\item a cofibration if $f$ has the left lifting property with respect
  to trivial fibrations.
\end{enumerate}

In this article we prove that the category $\Diff$ of diffeological
spaces has a Quillen model category structure which is Quillen
equivalent to the standard Quillen model structure on $\Top$.
When we prove that $\Top$ is the model category under the requirements
stated above, we need knowledge of homotopy groups and cell complexes.
Likewise, we need to define and study smooth homotopy groups
and smooth version of cell complexes.

The paper is organized as follows.  We briefly review in Section 2 the
basic properties of diffeological spaces, following the treatment
given by Iglesias-Zemmour \cite{Zem}.

In Section 3 we introduce smooth homotopy groups $\pi_n(X,x_0)$ and
relative homotopy groups $\pi_n(X,A,x_0)$ as the sets of homotopy
classes of smooth maps $(I^n,\bI^n) \to (X,x_0)$ and
$(I^n,\bI^n,J^{n-1}) \to (X,A,x_0)$, respectively.  Here, $I^n$ is the
standard $n$-cube, $\bI^n$ is the boundary of $I^n$, and $J^{n-1}$ is
its subset $\bI^{n-1} \times I \cup I^{n-1} \times \{1\}$.
Our definition of homotopy groups is different from, but turns out to
be equivalent to, the one given by \cite{Zem}.
Due to the difficulty of finding a smooth retraction of $I^n$ onto
$J^{n-1}$, the treatment of smooth homotopy groups slightly harder
compared to the case of continuous homotopy groups.  Still, basic
properties of continuous homotopy groups mostly hold in the smooth
case; in particular, there exists a \emph{homotopy long exact
  sequence} associated with a pair of diffeological spaces.

In Section 4 we introduce the notion of a \emph{weak fibration} and
investigate its homotopical behavior with respect to cubical cell
complexes.
Briefly, weak fibrations are characterized by some sort of the right
lifting property with respect to the inclusions $L^{n-1} \to I^n$,
where $L^{n-1} = \bI^{n-1} \times I \cup I^{n-1} \times \{0\}$.
Although we cannot establish \emph{homotopy extension property} and
\emph{covering homotopy extension property} in its full generality, we
provide a limited version of them (Theorems~\ref{thm:WCHEP} and
\ref{thm:WHEP}), which are more than enough to construct commonly used
tools in homotopy theory such as \emph{homotopy long exact sequence
  for fibrations} and \emph{change of basepoint homomorphism}.

Based on the results obtained in preceding sections, we prove in
Section 5 that $\Diff$ has a model structure in which a weak
equivalence is a weak homotopy equivalence.
Unlike the case of topological spaces, our model structure is not
cofibrantly generated, so we have to straightforwardly verify the
axioms, following the argument given in \cite{Spa}.

In Section 6 we show that our model structure on $\Diff$ is left
Quillen equivalent to the standard Quillen model structure on $\Top$.
The proof boils down to showing that the unit $X \to DTX$, associated
with the adjunction between $\Diff$ and $\Top$, is a weak equivalence
for a certain type of smooth cell complexes $X$, which in turn is
derived from the classical Whitney approximation theorem on smooth
manifolds.

Finally, in Section 7 we compare the model structures of $\Top$ and
$\Diff$ with the model structure on the category $\NumG$ of
numerically generated spaces studied in \cite{Haraguchi}.  It turns
out that the model structures on $\Top$, $\NumG$, and $\Diff$ are
Quillen equivalent to each other.
We also observe that there exist diffeological spaces which have
smooth homotopy groups not isomorphic to their continuous homotopy
groups, and hence do not have the smooth homotopy type of a
topological space.  The Irrational torus is a typical example of such
spaces.

Throughout the paper, composition of maps is denoted in the form
$g \circ f$; but the abbreviated notation $g f$ is also used in the
complicated formulas.

The authors wish to thank Dan Christensen and Hiroshi Kihara for
helpful discussions while preparing the article.

\section{Diffeological spaces}
In this section we recall basic facts about diffeological spaces.  For
details see \cite{Zem}.

A diffeological space consists of a set $X$ together with a family $D$
of maps from open subsets of Euclidean spaces into $X$ satisfying the
following conditions:

(Covering) any constant parameterization $\R^n \to X$ belongs to $D$;

(Locality) a parameterization $P \colon U \to X$ belongs to $D$ if
every point $u$ of $U$ has a neighborhood $W$ such that $P|W \colon W
\to X$ belongs to $D$; and

(Smooth compatibility) if $P \colon U \to X$ belongs to $D$, then so
does the composite map $P\circ Q \colon V \to X$ for any smooth map $Q
\colon V \to U$ between open subsets of Euclidean spaces.

We call $D$ a diffeology of $X$, and each member of $D$ a plot of $X$.
Throughout the paper, $\R^n$ denotes the $n$-dimensional Euclidean
space equipped with the standard diffeology 
consisting of all smooth parameterizations of $\R^n$.

A map between diffeological spaces $f \colon X \to Y$ is called smooth
if for every plot $P \colon U \to X$ of $X$ the composite map
$f \circ P \colon U \to Y$ is a plot of $Y$.  In particular, if $D$
and $D'$ are diffeologies on a set $X$, then the identity map
$(X,D) \to (X,D')$ is smooth if and only if $D \subset D'$ holds.  In
that case, we say that $D$ is finer than $D'$, or $D'$ is coarser than
$D$.
A smooth map $f \colon X \to Y$ is called a diffeomorphism if $f$ is
bijective and its inverse $f^{-1}$ is also smooth.  If there is a
diffeomorphism from $X$ to $Y$ then we say that $X$ and $Y$ are
diffeomorphic and write $X \cong Y$.

Suppose $(X,D)$ is a diffeological space.  Then to any map $f$ from
a set $A$ to $X$ there exists a coarsest diffeology $f^*D$ on $A$
such that $f \colon A \to X$ is smooth.  The diffeology $f^*D$ is
called a pullback of $D$ along $f$.
A smooth injection $i \colon Z \to X$ is called an induction if $Z$
has the diffeology $i^*D$.  In particular, if $i$ is an inclusion of a
subset then $Z$ is called a subspace of $X$.

Dually, to any map $g$ from $X$ to a set $C$ there exists a finest
diffeology $g_*D$ on $C$ such that $g \colon X \to C$ is smooth.  The
diffeology $g_*D$ is called a pushforward of $D$ along $g$.  A smooth
surjection $p \colon X \to Z$ is called a subduction if $Z$ has the
diffeology $p_*D$.  In this case, $Z$ is called a quotient space of
$X$.
The following propositions are immediate from the definition.

\begin{prp}
  Let $f \colon X \to Y$ be a bijection between diffeological
  spaces. Then the following are equivalent to each other:
  \begin{enumerate}
  \item The map $f$ is an induction.
  \item The map $f$ is a subduction.
  \item The map $f$ is a diffeomorphism.
  \end{enumerate}
\end{prp}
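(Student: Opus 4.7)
The plan is to establish $(1) \Leftrightarrow (3) \Leftrightarrow (2)$ by unwinding the universal characterizations of the pullback and pushforward diffeologies, exploiting the bijectivity of $f$ at each step. Both equivalences ultimately reduce to comparing the given diffeology on $X$ (resp. $Y$) with the one transported along $f$.

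For $(1) \Leftrightarrow (3)$, first record that $f^{*}D_{Y}$ coincides with the set of parameterizations $Q \colon U \to X$ for which $f \circ Q$ is a plot of $Y$. Assuming (1), namely $D_{X} = f^{*}D_{Y}$, smoothness of $f^{-1}$ is immediate: for any plot $P \colon U \to Y$, the identity $f \circ (f^{-1} \circ P) = P$ shows that $f^{-1} \circ P \in f^{*}D_{Y} = D_{X}$. Conversely, if $f$ is a diffeomorphism, smoothness of $f$ gives $D_{X} \subset f^{*}D_{Y}$, while smoothness of $f^{-1}$ gives the reverse inclusion by writing $Q = f^{-1} \circ (f \circ Q)$ for any $Q \in f^{*}D_{Y}$.

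The equivalence $(2) \Leftrightarrow (3)$ is the dual assertion and proceeds symmetrically. A parameterization $R \colon U \to Y$ belongs to $f_{*}D_{X}$ iff every point of $U$ has a neighborhood on which $R$ lifts through $f$ to a plot of $X$; because $f$ is bijective this lift is forced to be $f^{-1} \circ R$, and the locality axiom then collapses the local-lifting condition to the global statement $f^{-1} \circ R \in D_{X}$. Given this, both directions mirror the argument above. I do not expect any serious obstacle: the only delicate point is precisely the collapse of the local-lifting description of $f_{*}D_{X}$ to a global condition under the bijectivity assumption, and once that observation is in hand each direction of each equivalence is a two-line application of the relevant universal property.
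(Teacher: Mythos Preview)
Your argument is correct: unwinding the explicit descriptions of $f^{*}D_{Y}$ and $f_{*}D_{X}$ under the bijectivity hypothesis yields each equivalence directly, and your observation that bijectivity collapses the local-lifting description of the pushforward to the global condition $f^{-1}\circ R \in D_{X}$ (via the locality axiom) is exactly the point that makes the subduction side work.

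As for comparison with the paper: the paper does not actually give a proof of this proposition. It simply records, immediately before the statement, that ``the following propositions are immediate from the definition.'' Your write-up is thus a faithful expansion of what the authors leave to the reader, and there is no alternative approach in the paper to compare against.
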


\begin{prp}
  Let $X$, $Y$ and $Z$ be diffeological spaces and let
  $p \colon X \to Y$ be a subduction.  Then the following hold:
  \begin{enumerate}
  \item A map $f \colon Y \to Z$ is smooth if and only if so is $f
    \circ p$.
  \item A map $f \colon Y \to Z$ is a subduction if and only if so is
    $f \circ p$.
  \end{enumerate}
\end{prp}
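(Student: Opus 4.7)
The plan is to reduce everything to the universal property of the pushforward diffeology: since $p\colon X\to Y$ is a subduction, $Y$ carries $p_{*}D_{X}$, the finest diffeology making $p$ smooth. Equivalently, a parameterization $Q\colon V\to Y$ is a plot of $Y$ precisely when each $v\in V$ has a neighborhood $W$ on which $Q|_{W}$ is either constant or of the form $p\circ\tilde{Q}$ for some plot $\tilde{Q}\colon W\to X$. I would state this characterization at the start and then both parts of the proposition fall out by direct manipulation.

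For part (1), the ``only if'' direction is immediate from the fact that smooth maps compose, since $p$ is itself smooth. For the ``if'' direction, assume $f\circ p$ is smooth and take an arbitrary plot $Q\colon V\to Y$. Since smoothness is a local condition (by the Locality axiom), it suffices to check $f\circ Q$ on each member of a covering of $V$. On the neighborhoods where $Q|_{W}$ is constant, $f\circ Q|_{W}$ is constant, hence a plot; on the neighborhoods where $Q|_{W}=p\circ\tilde{Q}$, we have $f\circ Q|_{W}=(f\circ p)\circ\tilde{Q}$, which is a plot because $f\circ p$ is smooth and $\tilde{Q}$ is a plot of $X$. Hence $f$ is smooth.

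For part (2), I would first check surjectivity: in the forward direction, $f\circ p$ is surjective because both factors are, and in the reverse direction, $f\circ p$ surjective forces $f$ surjective. The rest is identifying the diffeology on $Z$. Using the universal property of pushforward (or equivalently part (1) applied twice), one shows the transitivity identity $(f\circ p)_{*}D_{X}=f_{*}(p_{*}D_{X})=f_{*}D_{Y}$, because a map $h\colon Z\to Z'$ is smooth for either side exactly when $h\circ f\circ p$ is smooth into $Z'$. Thus $D_{Z}=(f\circ p)_{*}D_{X}$ if and only if $D_{Z}=f_{*}D_{Y}$, which is precisely the statement that $f\circ p$ is a subduction if and only if $f$ is.

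There is no real obstacle here; the only point that requires a little care is writing down the transitivity of pushforward diffeologies cleanly, which I would do via the universal property rather than by manipulating plots directly, to avoid the bookkeeping with local factorizations appearing twice.
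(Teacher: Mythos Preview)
Your proposal is correct. The paper itself gives no proof of this proposition beyond the sentence ``The following propositions are immediate from the definition,'' so your argument simply spells out what the authors leave to the reader; in particular your use of the local factorization of plots through $p$ for part~(1) and the transitivity $(f\circ p)_{*}D_{X}=f_{*}(p_{*}D_{X})$ via the universal property for part~(2) is exactly the expected unpacking.
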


The class of diffeological spaces together with smooth maps form a
category $\Diff$ which is complete, cocomplete, and is cartesian
closed.  (See, e.g.\ \cite[Theorem~2.1]{SYH}.)  In fact, its cartesian
closedness is a consequence of the exponential law described below.

For given diffeological spaces $X$ and $Y$, let $\Cinfty(X,Y)$ denote
the set of smooth maps from $X$ to $Y$.  Then $\Cinfty(X,Y)$ has a
diffeology $D_{X,Y}$ consisting of those parameterizations
$P \colon U \to \Cinfty(X,Y)$ such that for every plot
$Q \colon V \to X$ of $X$, the composition of $(P,Q)$ with the
evaluation map
\[
  U \times V \to \Cinfty(X,Y) \times X \to Y
\]
is a plot of $Y$.
In other words, $D_{X,Y}$ is the coarsest diffeology on $\Cinfty(X,Y)$
such that the evaluation map $\Cinfty(X,Y) \times X \to Y$ is smooth.


\begin{prp}[{\cite[1.59]{Zem}}]
  \label{prp:composition is smooth}
  Suppose $X$, $Y$ and $Z$ are diffeological spaces.  Then the
  composition
  \[
  \Cinfty(Y,Z) \times \Cinfty(X,Y) \to \Cinfty(X,Z), \quad (g,f)
  \mapsto g\circ f,
  \]
  is smooth.
\end{prp}

For given $f \in \Cinfty(X \times Y,Z)$, let us define
$\alpha(f) \in \Cinfty(X,\Cinfty(Y,Z))$ by the formula:
$\alpha(f)(x)(y) = f(x,y) \ (x \in X,\; y \in Y)$.  Then we have the
following exponential law.

\begin{prp}[{\cite[1.60]{Zem}}]
  \label{prp:exponential law}
  For any $X$, $Y$ and $Z$, the map
  \[
    \alpha \colon \Cinfty(X \times Y,Z) \to \Cinfty(X,\Cinfty(Y,Z)),
  \]
  which takes $f \in \Cinfty(X \times Y,Z)$ to $\alpha(f)$ is a
  diffeomorphism.
\end{prp}

\section{Homotopy sets}
We introduce homotopy groups for diffeological spaces in a slightly
different manner than the one given in \cite{Zem}.

Let $\R$ be the real line equipped with the standard diffeology, and
let $I$ be the unit interval $[0,1]$ equipped with the subspace
diffeology.  Suppose $f_0,\, f_1 \colon X \to Y$ are smooth maps
between diffeological spaces.  We say that $f_0$ and $f_1$ are
homotopic, written $f_0 \simeq f_1$, if there is a smooth map
$F \colon X \times I \to Y$ such that $F(x,0) = f_0(x)$ and
$F(x,1) = f_1(x)$ hold for every $x \in X$.  Such a smooth map $F$ is
called a homotopy between $f_0$ and $f_1$.
A map $f \colon X \to Y$ is called a homotopy equivalence if there is
a smooth map $g \colon Y \to X$ satisfying
\[
  g \circ f \simeq 1 \colon X \to X, \quad f \circ g \simeq 1 \colon Y
  \to Y.
\]
We say that $X$ and $Y$ are homotopy equivalent, written $X \simeq Y$,
if there exists a homotopy equivalence $f \colon X \to Y$.

We will show that the notion of homotopy introduced above is
equivalent to the one given in \cite{Zem}.
Following \cite{JM}, let $\gamma$ be the smooth function given by
$\gamma(t) = 0$ for $t \leq 0$, and $\gamma(t) = \exp(-1/t)$ for
$t > 0$, and put
\[
  \lambda(t) = \frac{\gamma(t)}{\gamma(t) + \gamma(1-t)}\, .
\]
Then $\lambda \colon \R \to I$ is a non-decreasing smooth function
satisfying $\lambda(t) = 0$ for $t \leq 0$, $\lambda(t) = 1$ for
$1\leq t$, and $\lambda(1-t) = 1-\lambda(t)$ for every $t$.

\begin{prp}
  Let $f_0,\, f_1 \colon X \to Y$ be smooth maps.  Then $f_0$ and
  $f_1$ are homotopic if and only if there exists a smooth map
  $G \colon X \times \R \to Y$ such that $G(x,0) = f_0(x)$ and
  $G(x,1) = f_1(x)$ hold for every $x \in X$.
\end{prp}

\begin{proof}
  Suppose there is a smooth map $G \colon X \times \R \to Y$ such that
  $G(x,0) = f_0(x)$ and $G(x,1) = f_1(x)$ hold for every $x \in X$.
  Then the restriction of $G$ to $X \times I$ gives a homotopy
  $f_0 \simeq f_1$.
  On the other hand, if there is a homotopy
  $F \colon X \times I \to Y$ between $f_0$ and $f_1$ then the
  composition $G = F\circ(1 \times \lambda)$ is a smooth map
  $X \times \R \to Y$ satisfying $G(x,0) = f_0(x)$ and
  $G(x,1) = f_1(x)$.
\end{proof}

Suppose $F$ is a homotopy between $f_0,\, f_1 \colon X \to Y$ and $G$
a homotopy between $f_1,\, f_2 \colon X \to Y$.  Let us define
$F * G \colon X \times I \to Y$ by the formula
\[
  F * G(x,t) =
  \begin{cases}
    F(x,\lambda(3t)), & 0 \leq t \leq 1/2,
    \\
    G(x,\lambda(3t-2)), & 1/2 \leq t \leq 1.
\end{cases}
\]
Then $F * G$ is smooth all over $X \times I$, hence gives a homotopy
between $f_0$ and $f_2$.  It follows that the relation ``$\simeq$'' is
an equivalence relation.  The resulting equivalence classes are called
homotopy classes.

In particular, if $P$ consists of a single point then smooth maps from
$P$ to $X$ are just the points of $X$ and their homotopies are smooth
paths $I \to X$.

\begin{dfn}
  Given a diffeological space $X$, we denote by $\pi_0{X}$ the set of
  path components of $X$, that is, equivalence classes of points of
  $X$, where $x$ and $y$ are equivalent if there is a smooth path
  $\alpha \colon I \to X$ such that $\alpha(0) = x$ and
  $\alpha(1) = y$ hold.
\end{dfn}

For given pairs of diffeological spaces $(X,X_1)$ and $(Y,Y_1)$, we
put
\[
  [X,X_1 \semicolon Y,Y_1] = \pi_0\Cinfty((X,X_1),(Y,Y_1)),
\]
where $\Cinfty((X,X_1),(Y,Y_1))$ is the subspace of $\Cinfty(X,Y)$
consisting of maps of pairs $(X,X_1) \to (Y,Y_1)$.
Similarly, we put
\[
  [X,X_1,X_2 \semicolon Y,Y_1,Y_2] =
  \pi_0\Cinfty((X,X_1,X_2),(Y,Y_1,Y_2)),
\]
where $\Cinfty((X,X_1,X_2),(Y,Y_1,Y_2))$ is the subspace consisting of
maps of triples.  Clearly, we have
$[X,X_1 \semicolon Y,Y_1] = [X,X_1,\emptyset \semicolon
Y,Y_1,\emptyset]$.

By the cartesian closedness of $\Diff$ we have the following.

\begin{prp}
  The elements of $[X,X_1,X_2 \semicolon Y,Y_1,Y_2]$ are in one-to-one
  correspondence with the homotopy classes of maps
  $(X,X_1,X_2) \to (Y,Y_1,Y_2)$.
\end{prp}

As a consequence of Propositions~\ref{prp:composition is smooth} and
\ref{prp:exponential law} together with the argument similar to that
of \cite[Proposition~6.1]{SYH}, we can show the following.

\begin{prp}
  \label{prp:mapping space is homotopy invariant}
  Suppose $f \colon (X,X_1,X_2) \to (Y,Y_1,Y_2)$ is a homotopy
  equivalence.  Then the precomposition and postcomposition by $f$
  induce homotopy equivalences
  \[
  \begin{split}
    f^* &\colon \Cinfty((Y,Y_1,Y_2),(Z,Z_1,Z_2)) \to
    \Cinfty((X,X_1,X_2),(Z,Z_1,Z_2))
    \\
    f_* &\colon \Cinfty((Z,Z_1,Z_2),(X,X_1,X_2)) \to
    \Cinfty((Z,Z_1,Z_2),(Y,Y_1,Y_2))
  \end{split}
  \]
  for every $(Z,Z_1,Z_2)$.
\end{prp}

\begin{proof}
  For any $\boldsymbol{X} = (X,X_1,X_2)$ and fixed
  $\boldsymbol{Z} = (Z,Z_1,Z_2)$, put
  \[
    F\boldsymbol{X} = \Cinfty(\boldsymbol{X},\boldsymbol{Z}) =
    \Cinfty((X,X_1,X_2),(Z,Z_1,Z_2)).
  \]
  We shall show that the contravariant functor $F$ from the category
  of triples of diffeological spaces to $\Diff$ preserves homotopies.
  This of course implies that
  $f^* \colon F\boldsymbol{Y} \to F\boldsymbol{X}$ is a homotopy
  equivalence if so is $f \colon \boldsymbol{X} \to \boldsymbol{Y}$.

  The contravariant functor $F$ is enriched in the sense that the map
  \[
    \Cinfty(\boldsymbol{X},\boldsymbol{Y}) \to
    \Cinfty(F\boldsymbol{Y},F\boldsymbol{X}),
  \]
  which takes $f \colon \boldsymbol{X} \to \boldsymbol{Y}$ to the
  induced map $f^* \colon F\boldsymbol{Y} \to F\boldsymbol{X}$, is
  smooth.  This follows from Proposition~\ref{prp:composition is
    smooth} because the map above is adjoint to the composition
  $\Cinfty(\boldsymbol{Y},\boldsymbol{Z}) \times
  \Cinfty(\boldsymbol{X},\boldsymbol{Y}) \to
  \Cinfty(\boldsymbol{X},\boldsymbol{Z})$.

  Suppose $h \colon \boldsymbol{X} \times I \to \boldsymbol{Y}$ is a
  homotopy between $f$ and $g$.  Then by
  Proposition~\ref{prp:exponential law} together with the enrichedness
  of $F$ the composite map
  \[
    I \to \Cinfty(\boldsymbol{X},\boldsymbol{Y}) \to
    \Cinfty(F\boldsymbol{Y},F\boldsymbol{X}),
  \]
  which takes $t \in I$ to
  $h_t^* \colon F\boldsymbol{Y} \to F\boldsymbol{X}$, is smooth.
  Thus, by passing to the adjoint again, we get a smooth map
  $F\boldsymbol{Y} \times I \to F\boldsymbol{X}$ giving a homotopy
  between $f^*$ and $g^*$.

  Quite similarly, we can prove that the covariant functor
  $\boldsymbol{X} \to \Cinfty(\boldsymbol{Z},\boldsymbol{X})$
  preserves homotopies.
\end{proof}

\begin{crl}
  \label{crl:homotopy set is homotopy invariant}
  The homotopy set $[X,X_1,X_2 \semicolon Y,Y_1,Y_2]$ is homotopy
  invariant with respect to both $(X,X_1,X_2)$ and $(Y,Y_1,Y_2)$.
\end{crl}

We are now ready to define the $n$-th homotopy set of a diffeological
space.  Denote by $\bI^n$ the boundary of the cube $I^n$, and let
\[
  J^{n-1} = \bI^{n-1} \times I \cup I^{n-1} \times \{1\}
\]
for $n \geq 1$.  We regard $J^{n-1}$ as a subspace of $\bI^n$.

\begin{dfn}
  Given a pointed diffeological space $(X,x_0)$, we put
  \[
    \pi_n(X,x_0) = [I^n,\bI^n \semicolon X,x_0],\quad n \geq 0.
  \]
  Similarly, given a pointed pair of diffeological spaces $(X,A,x_0)$,
  we put
  \[
    \pi_n(X,A,x_0) = [I^n,\bI^n,J^{n-1} \semicolon X,A,x_0],\quad n
    \geq 1.
  \]
\end{dfn}

For $n \geq 1$, $\pi_n(X,x_0)$ is isomorphic to $\pi_n(X,x_0,x_0)$,
and $\pi_0(X,x_0)$ is isomorphic to the set of path components
$\pi_0{X}$, regardless of the choice of basepoint $x_0$.  Note,
however, that we consider $\pi_0(X,x_0)$ as a pointed set with
basepoint $[x_0] \in \pi_0{X}$.

\begin{rmk}
  Our definition of $\pi_n(X,x_0)$ is equivalent to the one given in
  \cite{Zem}; in which the $n$-th homotopy set of $(X,x_0)$ is defined
  to be the set of path components of the iterated loop space
  $\Loops^n(X,x_0)$, where
  \[
    \Loops(Y,y) = \Cinfty((\R,0,1),(Y,y,y))
  \]
  for any pointed diffeological space $(Y,y)$.
  On the other hand, our $\pi_n(X,x_0)$ is the set of path components
  of another type of iterated loop space $\Omega^n(X,x_0)$, where
  \[
    \Omega(Y,y) = \Cinfty((I,0,1),(Y,y,y)).
  \]
  But the inclusion of $(I,0,1)$ into $(\R,0,1)$ is a homotopy
  equivalence because it has a homotopy inverse
  $\lambda \colon (\R,0,1) \to (I,0,1)$.
  Thus, by Proposition~\ref{prp:mapping space is homotopy invariant}
  we have
  \[
    \Loops(Y,y) \simeq \Omega(Y,y),
  \]
  implying the homotopy equivalence
  $\Loops^n(X,x_0) \simeq \Omega^n(X,x_0)$ for all $n \geq 0$.
  The situation is similar for the homotopy sets of pairs
  $\pi_n(X,A,x_0)$.
\end{rmk}

We now introduce a group structure on $\pi_n(X,A,x_0)$.  Suppose
$\phi$ and $\psi$ are smooth maps from $(I^n,\bI^n,J^{n-1})$ to
$(X,A,x_0)$.  If $n \geq 2$, or if $n \geq 1$ and $A = x_0$, then
there is a smooth map $\phi * \psi \colon I^n \to X$ which takes
$(t_1,t_2,\dots,t_n) \in I^n$ to
\[
  \begin{cases}
    \phi(\lambda(3t_1),t_2,\dots,t_n), & 0 \leq t_1 \leq 1/2
    \\
    \psi(\lambda(3t_1-2),t_2,\dots,t_n), & 1/2 \leq t_1 \leq 1.
  \end{cases}
\]
It is clear that $\phi * \psi$ defines a map of triples
$(I^n,\bI^n,J^{n-1}) \to (X,A,x_0)$, and there is a multiplication on
$\pi_n(X,A,x_0)$ given by the formula
\[
  [\phi] \cdot [\psi] = [\phi * \psi] \in \pi_n(X,A,x_0).
\]

\begin{prp}
  \label{prp:group structure}
  With respect to the multiplication
  $([\phi],[\psi]) \mapsto [\phi]\cdot[\psi]$, the homotopy set
  $\pi_n(X,A,x_0)$ is a group if $n \geq 2$ of if $n \geq 1$ and
  $A = x_0$, and is an abelian group if $n \geq 3$ of if $n \geq 2$
  and $A = x_0$.
  Moreover, for every
  smooth map $f \colon (X,A,x_0) \to (Y,B,y_0)$ the induced map
  \[
    f_* \colon \pi_n(X,A,x_0) \to \pi_n(Y,B,y_0)
  \]
  is a group homomorphism whenever its source and target are groups.
\end{prp}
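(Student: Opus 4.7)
The plan is to transport the classical proof of the group structure on topological homotopy groups to $\Diff$, using the fact that $Q_n \colon \I^n \to \D{n}$ is a subduction. Since $Q_n$ is a subduction, smooth maps out of $\D{n}$ correspond bijectively to smooth maps $\I^n \to X$ that are constant on each fibre of $Q_n$, and the same correspondence holds for homotopies upon replacing $\I^n$ with $\I^n \times \I$ and $\D{n}$ with $\D{n} \times \I$. So every verification below will be carried out on the cube and then descended. The analytic engine is the function $\lambda$: because $\lambda$ is constant near $0$ and near $1$, piecewise formulas built from $\lambda(3t)$ and $\lambda(3t-2)$ match smoothly at their seams, which is exactly what makes $\phi * \psi$ smooth on $\D{n}$ in the first place.

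First I would check well-definedness. If $H \colon \phi \simeq \phi'$ and $K \colon \psi \simeq \psi'$ are homotopies of triples, the same concatenation formula applied time-slice-wise to $H\circ(Q_n\times 1)$ and $K\circ(Q_n\times 1)$ yields a smooth map $\I^n \times \I \to X$ that descends to a homotopy of triples $\phi * \psi \simeq \phi' * \psi'$. Next, the three remaining group axioms are produced by writing down the usual reparametrizations on the $t_1$-coordinate of $\I^n$ and smoothing them by $\lambda$: an identity via the constant map at $x_0$ (using a $\lambda$-smoothed collapse of one half of the cube); associativity via a $\lambda$-smoothed reparametrization sending the partition $[0,1/4,1/2,1]$ to $[0,1/2,3/4,1]$; and an inverse of $[\phi]$ represented by $\bar\phi$ with $\bar\phi \circ Q_n(t_1,\dots,t_n) = \phi \circ Q_n(\lambda(1-t_1), t_2,\dots,t_n)$, together with the standard shrinking homotopy from $\phi * \bar\phi$ to the constant. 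The hypothesis that either $n \geq 2$, or $n \geq 1$ with $A = x_0$, is used exactly to keep these homotopies of the first coordinate compatible with the triple structure on $\tS{n-1}$ and $\D[-]{n-1}$.

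For commutativity in the stable range the argument is Eckmann--Hilton: when $n \geq 3$, or $n \geq 2$ with $A = x_0$, there is a second concatenation $*_2$ defined by the same recipe on the $t_2$-coordinate; the two multiplications share the constant map as identity and interchange through a smooth family of self-diffeomorphisms of the unit square, again assembled from $\lambda$ and descended through $Q_n$, which forces $[\phi]\cdot[\psi]=[\psi]\cdot[\phi]$. Functoriality of $f_*$ is immediate from $f\circ(\phi*\psi) = (f\circ\phi)*(f\circ\psi)$, which holds on the nose by construction.

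The main obstacle throughout is not any homotopy-theoretic novelty but the smoothness bookkeeping: every concatenation and reparametrization must be engineered on $\I^n$ using $\lambda$ as a smooth substitute for a continuous partition, and then shown to factor through the subduction $Q_n$ so as to define a smooth map out of $\D{n}$. This is precisely the role of $\lambda$ and of the quotient definition of $\D{n}$ adopted in the paper, so once this discipline is in place the classical topological template can be followed essentially verbatim.
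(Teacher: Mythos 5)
The paper states Proposition~\ref{prp:group structure} without proof, treating it as a routine transcription of the classical topological argument, so there is no paper proof to compare against line by line. Your fill-in is correct and follows precisely the approach implicit in the paper's construction of $\phi*\psi$: transport every verification (well-definedness, unit, associativity, inverses, Eckmann--Hilton commutativity, functoriality of $f_*$) to $\I^n$ via $\lambda$-smoothed reparametrizations of the first coordinate, then descend through the subduction $Q_n$; and the hypotheses $n\geq 2$, or $n\geq 1$ with $A=x_0$, are indeed exactly what make the seams and reparametrizations compatible with the triple $(\D{n},\tS{n-1},\D[-]{n-1})$. One cosmetic remark: in your inverse formula $\bar\phi\circ Q_n(t_1,\dots,t_n)=\phi\circ Q_n(\lambda(1-t_1),t_2,\dots,t_n)$ the outer $\lambda$ is unnecessary, since the identity $\lambda(1-t)=1-\lambda(t)$ already shows that $t_1\mapsto 1-t_1$ is a smooth self-map of $\I$.
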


In the case of topological spaces, the fact that $J^{n-1}$ is a
retract of $I^n$ is crucial for developing homotopy theory.  (Compare
e.g.\ homotopy exact sequence and homotopy extension property).
Unfortunately, it is not easy to construct a smooth retraction
$I^n \to J^{n-1}$.  Thus, we try to retrieve most of the ingredients
of homotopy theory without relying such a strict retraction.

\begin{dfn}
  \label{dfn:definition of tame maps}
  Let $f \colon K \to X$ be a smooth map from a cubical subcomplex $K$
  of $I^n$ (e.g.\ $I^n$, $\bI^n$, or $J^{n-1}$) to a diffeological
  space $X$.  Suppose $0 < \epsilon \leq 1/2$.  Then $f$ is called to
  be \emph{$\epsilon$-tame} if we have
  \[
    f(t_1,\cdots,t_{j-1},t_j,t_{j+1},\cdots,t_n) =
    f(t_1,\cdots,t_{j-1},\alpha,t_{j+1},\cdots,t_n)
  \]
  for every $(t_1,\cdots,t_n) \in K$ and $\alpha \in \{0,1\}$ such
  that $\abs{t_j - \alpha} \leq \epsilon$ holds.
  We use the abbreviation ``tame'' to mean $\epsilon$-tame for some
  $\epsilon > 0$.
\end{dfn}

Note that $\epsilon$-tameness implies $\sigma$-tameness for any
$\sigma < \epsilon$. Note also that $f \colon K \to X$ is $1/2$-tame
if and only if it is locally constant.  In particular, every map from
a $0$-dimensional complex $K$ is $1/2$-tame.

For $0 < \epsilon \leq 1/2$, we denote
$I^n(\epsilon) = [\epsilon,1-\epsilon]^n$ and call it the
\emph{$\epsilon$-chamber} of $I^n$.
More generally, if $K$ is a cubical subcomplex of $I^n$ then its
$\epsilon$-chamber $K(\epsilon)$ is defined to be the union of
$\epsilon$-chambers of its maximal faces.
Thus we have $\bI^n(\epsilon) = \bigcup F(\epsilon)$, where $F$ runs
through the $(n-1)$-dimensional faces of $I^n$, and
$J^{n-1}(\epsilon) = \bI^n(\epsilon) \cap J^{n-1}$.
It is evident that the following holds.

\begin{lmm}
  \label{lmm:uniqueness criterion for tame maps}
  Let $f$ and $g$ be smooth maps from a cubical subcomplex $K$ of
  $I^n$ to a diffeological space $X$.  Suppose both $f$ and $g$ are
  $\epsilon$-tame.  Then $f$ and $g$ coincide on $K$ if and only if
  they coincide on the $\epsilon$-chamber $K(\epsilon)$.
\end{lmm}

We show that any tame map defined on $J^{n-1}$ is extendable over
$I^n$.  To see this we need several lemmas.

\begin{lmm}
  \label{lmm:modified smash function}
  Suppose $0 \leq \sigma < \tau \leq 1/2$.  Then there exists a
  non-decreasing smooth function $T_{\sigma,\tau} \colon I \to I$
  satisfying the following conditions:
  \begin{enumerate}
  \item $T_{\sigma,\tau}(t) = 0$ for $t \leq \sigma$,
  \item $T_{\sigma,\tau}(t) = t$ for $\tau \leq t \leq 1 - \tau$,
  \item $T_{\sigma,\tau}(t) = 1$ for $1 - \sigma \leq t$, and
  \item $T_{\sigma,\tau}(1 - t) = 1 - T_{\sigma,\tau}(t)$
    for all $t$.
  \end{enumerate}
\end{lmm}

\begin{proof}
  For every $t \in \R$, put
  \[
    F(t) = \int_0^t \lambda\left(\frac{\tau x - \sigma}{\tau -
        \sigma}\right)dx +
    \frac{\tau+\sigma}{2\tau}\lambda\left(\frac{\tau t - \sigma}{\tau
        - \sigma}\right).
  \]
  Then $F \colon \R \to \R$ is a non-decreasing smooth function such
  that $F(t)$ has value $0$ for $t \leq \sigma/\tau$ and has value $t$
  for $t \geq 1$.  Now, let us define a function
  $T_{\sigma,\tau} \colon I \to I$ by putting
  $T_{\sigma,\tau}(t) = F(t/\tau)$ for $0 \leq t \leq 1/2$, and
  $T_{\sigma,\tau}(t) = 1 - F((1-t)/\tau)$ for $1/2 \leq t \leq 1$.
  As we have $T_{\sigma,\tau}(t) = t$ for $\tau \leq t \leq 1-\tau$,
  the function $T_{\sigma,\tau}$ is smooth all over $\R$ and satisfies
  the desired conditions.
\end{proof}

\begin{lmm}
  \label{lmm:taming of maps}
  Let $K$ be a cubical subcomplex of $I^n$.  Then for any smooth map
  $f \colon K \to X$ and $0 < \sigma < \epsilon \leq 1/2$, there
  exists a homotopy $f \simeq g$ relative to $K(\epsilon)$ such that
  $g$ is $\sigma$-tame.
  If $f$ is $\epsilon$-tame on a subcomplex $L$ of $K$ then the
  homotopy can be taken to be relative to $L \cup K(\epsilon)$.
\end{lmm}

\begin{proof}
  Let $g = f \circ (T_{\sigma,\epsilon})^n|K \colon K \to X$.  Then
  $g$ is $\sigma$-tame and there is a homotopy $f \simeq g$ relative
  to $K(\epsilon)$ given by the map $K \times I \to X$ which takes
  $(v,t) \in K \times I$ to $(1-t)f(v) + tg(v)$.
  If $f$ is $\epsilon$-tame on $L$ then the homotopy $f \simeq g$ is
  relative to $L \cup K(\epsilon)$ because $g$ coincides with $f$ on
  $L$.
\end{proof}

It follows, in particular, that any element of $\pi_n(X,A,x_0)$ is
represented by a tame map $(I^n,\bI^n,J^{n-1}) \to (X,A,x_0)$.

A map $I^n \to J^{n-1}$ is called an \emph{$\epsilon$-approximate
  retraction} if it restricts to the identity on the
$\epsilon$-chamber $J^{n-1}(\epsilon)$.

\begin{lmm}
  \label{lmm:approximate retraction exists}
  For any real number $\epsilon$ with $0 < \epsilon < 1/2$, there
  exists an $\epsilon$-approximate retraction
  $R_{\epsilon} \colon I^n \to J^{n-1}$.
\end{lmm}

\begin{proof}
  Let $\sigma < \epsilon' < \epsilon$, and let
  $m(u) = (1-u)\epsilon' + u\sigma$, so that we have
  $\epsilon' \geq m(u) \geq \sigma$ for $0 \leq u \leq 1$.
  For $t = (t_1,\cdots,t_{n-1}) \in I^{n-1}$ and $u \in I$, put
  \[
    v(t,u) = T_{\sigma,\epsilon}(u) + T_{\sigma,\epsilon}(1-u)\,
    {\prod}_{1 \leq k \leq n-1} \lambda\left(\frac{t_k}{m(u)}\right)
    \lambda\left(\frac{1-t_k}{m(u)}\right).
  \]
  Then the function $v \colon I^n \to I$ satisfies $v(t,u) = u$ for
  $(t,u) \in \bI^{n-1} \times I(\epsilon)$, and $v(t,u) = 1$ if
  $u \geq 1 - \sigma$ or if $t \in I^{n-1}(m(u))$ holds.
  It is easy to see that the smooth map
  $R_{\epsilon} \colon I^n \to J^{n-1}$ given by the formula
  \[
    R_{\epsilon}(t,u) =
    (T_{m(u),\epsilon}(t_1),\cdots,T_{m(u),\epsilon}(t_{n-1}),v(t,u)).
  \]
  restricts to the identity on $J^{n-1}(\epsilon)$.
\end{proof}

By combining this with Lemmas~\ref{lmm:uniqueness criterion for tame
  maps} we see that any tame map $J^{n-1} \to X$ can be extended to a
smooth map $I^n \to X$.  In fact, the following holds.

\begin{prp}
  \label{prp:tame maps are extendable}
  Any $\epsilon$-tame map $f \colon J^{n-1} \to X$ can be extended to
  a $\sigma$-tame map $g \colon I^n \to X$ for any
  $\sigma < \epsilon$.
  Moreover, if $f$ is $\epsilon'$-tame on $\bI^{n-1} \times \{0\}$
  then $g$ can be taken to be $\sigma'$-tame on $I^{n-1} \times \{0\}$
  for any $\sigma' < \epsilon'$.  
\end{prp}

\begin{proof}
  We may assume $\epsilon < \epsilon'$ and $\sigma < \sigma'$ hold.
  For $0 \leq u \leq 1$, let
  \[
    a(u) = \sigma + (\sigma' - \sigma)\lambda(1 - u/\sigma), \ \ %
    b(u) = \epsilon + (\epsilon' - \epsilon)\lambda(1 - u/\sigma).
  \]
  Then $a$ and $b$ are non-increasing functions satisfying
  $(a(0),b(0)) = (\sigma',\epsilon')$ and
  $(a(u),b(u)) = (\sigma,\epsilon)$ for $u \geq \sigma$.
  Now, let us define $g \colon I^n \to X$ by
  \[
    g(t_1,\,\cdots\,,t_{n-1},u) =
    f \circ R_{\epsilon}(T_{a(u),b(u)}(t_1),\,\cdots\,,
    T_{a(u),b(u)}(t_{n-1}), T_{\sigma,\epsilon}(u)).
  \]
  Then $g$ is a $\sigma$-tame extension of $f$ which also is
  $\sigma'$-tame on $I^{n-1} \times \{0\}$.
\end{proof}

For any pointed pair of diffeological spaces $(X,A,x_0)$, let
\[
  i_* \colon \pi_n(A,x_0) \to \pi_n(X,x_0), \quad %
  j_* \colon \pi_n(X,x_0) \to \pi_n(X,A,x_0)
\]
be the maps induced respectively by the inclusions
$(A,x_0) \to (X,x_0)$ and $(X,x_0,x_0) \to (X,A,x_0)$, and let
\[
  \varDelta \colon \pi_n(X,A,x_0) \to \pi_{n-1}(A,x_0) %
  \quad (n \geq 1)
\]
be the map which takes the class of
$\phi \colon (I^n,\bI^n,J^{n-1}) \to (X,A,x_0)$ to the class of its
restriction $\phi|I^{n-1} \colon (I^{n-1},\bI^{n-1}) \to (A,x_0)$.
Here, we identify $I^{n-1}$ with $I^{n-1} \times \{0\} \subset I^n$.
Clearly, $\varDelta$ is a group homomorphism for $n \geq 2$.

Since any element of the homotopy group has a tame representative, we
can obtain the homotopy exact sequence by arguing as in the case of
topological spaces.

\begin{prp}
  \label{prp:homotopy exact sequence}
  Given a pointed pair of diffeological spaces $(X,A,x_0)$, there is
  an exact sequence of pointed sets
  \begin{multline*}
    \cdots \xrightarrow{} \pi_{n+1}(X,A,x_0) \xrightarrow{\varDelta}
    \pi_n(A,x_0) \xrightarrow{i_*} \pi_n(X,x_0) \xrightarrow{j_*}
    \pi_n(X,A,x_0) \xrightarrow{} \cdots
    \\
    \cdots \xrightarrow{} \pi_1(X,A,x_0) \xrightarrow{\varDelta}
    \pi_0(A,x_0) \xrightarrow{i_*} \pi_0(X,x_0).
  \end{multline*}
\end{prp}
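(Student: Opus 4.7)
The plan is to mimic the classical proof of the long exact sequence of a pair for topological spaces by verifying exactness at each of the three positions $\pi_n(A,x_0)$, $\pi_n(X,x_0)$, $\pi_n(X,A,x_0)$ of the sequence. The diffeological ingredients required are: Lemma~\ref{lmm:deformation retract}, which supplies smooth deformation retractions of $\D{n+1}$ onto $\D{n}$ and onto $\D[-]{n}$ (hence also of $\D{n}$ onto $\D{n-1}$ and onto $\D[-]{n-1}$), together with the subduction $q_n \colon \D{n} \times \I \to \D{n+1}$, which lets us descend smooth maps on the cylinder to smooth maps on the disk.

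First I would check that each composition is zero. The composite $j_*i_*$ sends a representative $\alpha \colon (\D{n},\tS{n-1}) \to (A,x_0)$ to itself viewed as a map of triples; composing with a smooth deformation of $\D{n}$ onto $\D[-]{n-1} \subset \tS{n-1}$ yields a nullhomotopy through triple maps. The composite $\varDelta j_*$ vanishes trivially, because $\D{n-1} \subset \tS{n-1}$ forces $\phi|\D{n-1}$ to be constant at $x_0$. For $i_*\varDelta$, composing a triple representative $\phi$ with the smooth deformation of $\D{n}$ onto $\D[-]{n-1}$ (where $\phi$ is already constant at $x_0$) produces a smooth nullhomotopy of $\phi|\D{n-1}$ inside $X$, relative to $\tS{n-2}$.

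The substance of the proof lies in the reverse inclusions. At $\pi_n(X,x_0)$: given $\phi$ with $j_*[\phi] = 0$ via a triple nullhomotopy $H \colon \D{n} \times \I \to X$, the hypotheses force $H$ to map the union $(\D{n} \times \{1\}) \cup (\tS{n-1} \times \I)$ into $A$; a smooth retraction of the cylinder onto that subset produces $\alpha \colon (\D{n},\tS{n-1}) \to (A,x_0)$ with $i_*[\alpha] = [\phi]$. At $\pi_n(A,x_0)$: given $\alpha$ with $i_*[\alpha] = 0$, a smooth nullhomotopy $H \colon \D{n} \times \I \to X$ of $\alpha$ rel $\tS{n-1}$ is constant at $x_0$ on $\tS{n-1} \times \I$ and on $\D{n} \times \{1\}$, so it is constant on the fibres of $q_n$ and descends to $\widetilde H \colon \D{n+1} \to X$ whose restriction to the equatorial $\D{n}$ equals $\alpha$; then $\widetilde H$ defines a triple map $(\D{n+1},\tS{n},\D[-]{n}) \to (X,A,x_0)$ with $\varDelta[\widetilde H] = [\alpha]$. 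At $\pi_n(X,A,x_0)$: given $\phi$ with $\varDelta[\phi] = 0$, a smooth nullhomotopy of $\phi|\D{n-1}$ in $A$ rel $\tS{n-2}$ extends, via a smooth retraction of $\D{n} \times \I$ onto $(\D{n} \times \{0\}) \cup (\D{n-1} \times \I)$, to a smooth triple homotopy from $\phi$ to a representative which sends all of $\tS{n-1}$ to $x_0$, i.e., to a map in the image of $j_*$.

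The main obstacle is constructing the smooth retractions required in the three reverse-inclusion steps: the diffeological homotopy extension property for the pair $(\D{n},\tS{n-1})$, and the collapse that lets $H$ descend along $q_n$. Because the diffeology of $\D{n}$ is generated by the single plot $Q_n\circ \lambda^n$, smoothness of a map defined on $\D{n}$ (or its product with $\I$) reduces to smoothness of its pullback to the cube, and the required retractions can be written down explicitly using bump-and-slide formulas built from $\lambda$. Once these technical pieces are in place, the proof reduces to a routine translation of the standard topological argument into the smooth setting.
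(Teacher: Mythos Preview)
Your proposal is correct and matches the paper's approach: the paper gives no detailed proof at all, stating only ``As in the case of topological spaces, we can apply Lemma~\ref{lmm:deformation retract} to prove the following,'' which is precisely the strategy you outline. Your sketch is considerably more explicit than what the paper provides, and the additional technical ingredients you identify (the descent along the subduction $q_n$ and the smooth retractions of cylinders built from $\lambda$) are exactly the diffeological substitutes needed to make the classical argument go through.
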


\section{Cubical complexes and fibrations}
We introduce a diffeological version of the notion of Serre fibration,
and describe its homotopical behavior with respect to cubical cell
complexes.

For $n \geq 1$, let
$L^{n-1} = \bI^{n-1} \times I \cup I^{n-1} \times \{0\} \subset I^n$.

\begin{dfn}
  \label{dfn:weak_fibration}
  A smooth map $p \colon E \to B$ is called a \emph{weak fibration} if
  for any pair of tame maps $f \colon L^{n-1} \to E$ and
  $g \colon I^n \to B$ satisfying $p \circ f = g|L^{n-1}$, there
  exists a smooth map $G \colon I^n \to E$ satisfying $G|L^{n-1} = f$
  and $p \circ G = g$.
\end{dfn}

Here the lift $G$ can be taken to be tame on $I^n$.  More precisely,
we have 

\begin{prp}
  \label{prp:tameness of lifts}
  In the definition of weak fibration above, suppose both $f$ and $g$
  are $\epsilon$-tame.  Then the lift $G \colon I^n \to E$ can be
  taken to be $\sigma$-tame for any $\sigma$ satisfying
  $0 < \sigma < \epsilon$.
\end{prp}

\begin{proof}
  Suppose $G' \colon I^n \to E$ satisfies $G'|L^{n-1} = f$ and
  $p \circ G' = g$, and let $\rho_n = T_{\sigma,\epsilon}^n$ for
  $0 < \sigma < \epsilon$.  As $f$ and $g$ are $\epsilon$-tame, we
  have $f \circ \rho_n|L^{n-1} = f$ and $g \circ \rho_n = g$.
  But then the composition $G = G' \circ \rho_n \colon I^n \to E$ is
  $\sigma$-tame and satisfies $G|L^{n-1} = f$ and $p \circ G = g$.
\end{proof}

\begin{ex}
  \label{ex:weak_fibrations}
  (1) It follows by Proposition~\ref{prp:tame maps are extendable}
  that for any diffeological space $X$ the constant map $X \to *$ is a
  weak fibration.

  (2) If $p \colon E \to B$ is a diffeological fiber bundle with fiber
  $F$ then its pullback by a smooth map from $I^n$ to $B$ is trivial
  (cf.\ \cite[8.19, Lemma 2]{Zem}).  But (1) implies that a trivial
  fiber bundle is a weak fibration, hence so is $p$.

  (3) Given a diffeological space $X$ with basepoint $x_0$, let
  $P(X,x_0)$ denote the subset of $\Cinfty((I,\{1\}),(X,x_0))$
  consisting of those tame paths $\ell \colon I \to X$ satisfying
  $\ell(1) = x_0$.  Then we can show by using
  Lemma~\ref{lmm:approximate retraction exists} that the map
  $p \colon P(X,x_0) \to X$, which takes a path $\ell$ to its initial
  point $\ell(0)$, is a weak fibration.
  Note that the inclusion of $\widehat\Omega(X,x_0) = p^{-1}(x_0)$
  into $\Omega(X,x_0)$ is a homotopy equivalence by
  Lemma~\ref{lmm:taming of maps}.
\end{ex}

In the sequel we use the term \emph{cubical complex} to mean a
disjoint union of spaces, each diffeomorphic to a cubical subcomplex
of $I^n$ for various $n \geq 0$.
Thus a cubical complex $X$ is a union
$\bigcup_{\lambda \in \Lambda} X_{\lambda}$ such that each
$X_{\lambda}$ is diffeomorphic to $I^{n(\lambda)}$ for some
$n(\lambda) \geq 0$, and that the following conditions hold: (i) each
face of a cube $X_{\lambda}$ is a cube in $X$, (ii) the intersection
$X_{\lambda} \cap X_{\mu}$ of any two cubes is a face of each,
and (iii) each connected component of $X$ is a union of finite cubes.
%

\begin{dfn}
  \label{dfn:tame maps and tame homotopies}
  A smooth map $f$ from a cubical complex $X$ to a diffeological space
  $Y$ is called to be \emph{tame} if its restriction
  $f|X_{\lambda} \colon X_{\lambda} \to Y$ to each cube of $X$ is
  tame.
  Similarly, a homotopy $h \colon X \times I \to Y$ between tame maps
  is called to be \emph{tame} if
  $f|X_{\lambda} \times I \colon X_{\lambda} \times I \to Y$ is tame
  for every $\lambda$.
\end{dfn}

The theorem below shows that a weak fibration enjoys a slightly weaker
form of \emph{covering homotopy extension property} holds for cubical
complexes.

\begin{thm}
  \label{thm:WCHEP}
  Let $p \colon E \to B$ be a weak fibration, and $(X,A)$ be a pair of
  a cubical complex and its subcomplex.  Suppose there are a tame map
  $f \colon X \to E$ and tame homotopies $h \colon A \times I \to E$,
  $k \colon X \times I \to B$ satisfying $h_0 = f|A$,
  $k_0 = p\circ f$, and $p\circ h = k|A \times I$.
  Then there exists a tame homotopy $H \colon X \times I \to E$
  satisfying $H_0 = f$, $H|A \times I = h$, and $p\circ H = k$.
\end{thm}

Since the constant map $Y \to *$ is a weak fibration, we can apply the
theorem above to deduce a weaker form of \emph{homotopy extension
  property.}

\begin{thm}
  \label{thm:WHEP}
  Let $(X,A)$ be a pair of cubical complexes, and $f \colon X \to Y$
  be a tame map.
  Suppose there is a tame homotopy $h \colon A \times I \to Y$
  satisfying $h_0 = f|A$.  Then there exists a tame homotopy
  $H \colon X \times I \to Y$ satisfying $H_0 = f$ and
  $H|A \times I = h$.
\end{thm}

\begin{proof}[Proof of Theorem~\ref{thm:WCHEP}]
  For $n \geq -1$, let $X^n$ denote the union of all cubes
  $X_{\lambda}$ such that either $X_{\lambda} \subset A$ or
  $\dim X_{\lambda} \leq n$ holds.  Thus we have
  $A = X^{-1} \subset X^0 \subset X^1 \subset \cdots \subset X$.  We
  shall inductively construct a tame homotopy
  $H^n \colon X^n \times I \to E$ of $h|X^n \times I$ such that
  $H^n|X^{n-1} \times I = H^{n-1}$ holds.  The desired homotopy is
  obtained by putting $H = \colim H^n\colon X \times I \to E$.

  Suppose there is a tame homotopy
  $H^{n-1} \colon X^{n-1} \times I \to E$ satisfying
  $H^{n-1}_0 = f|X^{n-1}$, $H^{n-1}|A \times I = h$,
  $p \circ H^{n-1} = k|X^{n-1} \times I$, and choose a diffeomorphism
  $\Phi_{\lambda} \colon I^n \cong X_{\lambda}$ for each
  $n$-dimensional cube $X_{\lambda} \subset X^n$.
  Then we have a commutative diagram
  \[
    \xymatrix@C=46pt@R=32pt{%
      \bI^n \times I \cup I^n \times \{0\}
      \ar[r]
      \ar[d] & X^{n-1} \times I \cup X_{\lambda} \times \{0\}
      \ar[r]^-{H^{n-1} \cup f} \ar[d] & E \ar[d]^-{p}
      \\
      I^n \times I \ar[r]^-{\Phi_{\lambda} \times 1} \ar@{.>}[urr] &
      (X^{n-1} \cup X_{\lambda}) \times I \ar[r]^-{k} & B. }%
  \]
  Since the horizontal compositions of upper and lower arrows are
  tame, and since $p$ is a weak fibration, there exists a tame lift
  $K_{\lambda} \colon I^n \times I \to E$ making the diagram
  commutative.
  Hence there is a map
  $H_{\lambda} \colon (X^{n-1} \cup X_{\lambda}) \times I \to E$ such
  that the following diagram is commutative:
  \[
    \xymatrix@C=50pt{%
      X^{n-1} \times I \coprod I^n \times I \ar[r]^-{H^{n-1} \bigcup
        K_{\lambda}} \ar[d]_-{i \times 1 \bigcup \Phi_{\lambda} \times
        1} & E \ar[d]^-{p}
      \\
      (X^{n-1} \cup X_{\lambda}) \times I \ar[ru]_-{H_{\lambda}}
      \ar[r]^-{k} & B }%
  \]
  The map $H_{\lambda}$ is smooth, in fact tame, because so are
  $H^{n-1}$ and $K_{\lambda}$.
  Thus we can take the union of all such maps $H_{\lambda}$ to obtain
  a tame homotopy $H^n \colon X^n \times I \to E$ extending $H^{n-1}$
  and satisfying the desired conditions.
\end{proof}

By virtue of Theorems~\ref{thm:WCHEP} and \ref{thm:WHEP}, we can
construct basic tools in homotopy theory, such as homotopy long exact
sequence of a fibration and change of basepoint homomorphism, within
$\Diff$.

\begin{prp}
  Let $p \colon E \to B$ be a weak fibration and let $F = p^{-1}(b)$
  be the fiber at $b \in B$.  Let $i \colon F \to B$ be the inclusion.
  Then for any basepoint $e$ of $F$, there is an exact sequence
  \begin{multline*}
    \cdots \xrightarrow{} \pi_{n+1}(B,b) \xrightarrow{\varDelta'}
    \pi_n(F,e) \xrightarrow{i_*} \pi_n(E,e) \xrightarrow{p_*}
    \pi_n(B,b) \xrightarrow{} \cdots
    \\
    \cdots \xrightarrow{} \pi_1(B,b) \xrightarrow{\varDelta'}
    \pi_0(F,e) \xrightarrow{i_*} \pi_0(E,e) \xrightarrow{p_*}
    \pi_0(B,b).
  \end{multline*}
\end{prp}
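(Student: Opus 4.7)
The plan is to derive the asserted fiber sequence from Proposition~\ref{prp:homotopy exact sequence} applied to the pair $(E, F, e)$, combined with the key claim that $p_* \colon \pi_n(E, F, e) \to \pi_n(B, b)$ is a bijection for every $n \geq 1$. Once this bijection is established, the splicing is formal: set $\varDelta' = \varDelta \circ (p_*)^{-1}$ and rewrite each occurrence of $\pi_n(E, F, e)$ in the pair sequence as $\pi_n(B, b)$ via $p_*$. The extra tail $\pi_0(E, e) \xrightarrow{p_*} \pi_0(B, b)$ is then handled by a direct path-lifting argument: exactness at $\pi_0(E, e)$ follows by lifting a smooth path $\alpha \colon \I \to B$ from $p(e')$ to $b$ via the RLP against $k_0 \colon \D{0} \to \D{1}$ and observing that its endpoint lies in $F$.

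The surjectivity half of the key claim is the easy step. Given $\beta \colon (\D{n}, \tS{n-1}) \to (B, b)$, form the lifting square whose top map is the constant $\D[-]{n-1} \to E$ at $e$ and whose bottom map is $\beta$. The self-diffeomorphism of $\D{n}$ negating the $n$-th coordinate yields a pair diffeomorphism $(\D{n}, \D[-]{n-1}) \cong (\D{n}, \D{n-1})$, so the left inclusion is, up to diffeomorphism, the generating map $k_{n-1}$, and Definition~\ref{dfn:fibration} supplies a lift $\phi \colon \D{n} \to E$. Since $p\phi(\tS{n-1}) = \{b\}$ automatically, we have $\phi(\tS{n-1}) \subset F$, so $\phi$ represents a class of $\pi_n(E, F, e)$ mapping to $[\beta]$.

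The main obstacle will be the injectivity of $p_*$. Given representatives $\phi_0, \phi_1$ with $p\phi_0 \simeq p\phi_1$ via a homotopy $G \colon \D{n} \times \I \to B$ of pairs, one must produce a homotopy $\tilde G \colon \D{n} \times \I \to E$ of triples lifting $G$ and agreeing with $\phi_i$ on $\D{n} \times \{i\}$. This is a relative lifting problem against the inclusion
\[
\D{n} \times \{0, 1\} \cup \D[-]{n-1} \times \I \hookrightarrow \D{n} \times \I,
\]
which is not itself a generating cofibration. The plan is to decompose this inclusion into iterated attachments of the $k_m$'s, using the quotient description of $\D{m+1}$ via $q_m$ to identify products like $\D{n} \times \I$ with $\D{n+1}$-shaped building blocks; this is the diffeological analogue of the classical fact that $(D^n \times I, D^n \times \{0\} \cup \partial D^n \times I)$ is a deformation-retract pair. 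The cleanest route is likely to invoke the covering homotopy extension property proved in Section~4, after which the formal splicing described above completes the argument.
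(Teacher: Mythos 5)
Your overall strategy and your surjectivity argument match the paper's. The gap is in the injectivity step, which you yourself flag as "the main obstacle" but do not resolve.

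The two routes you suggest for producing the required homotopy of triples do not go through as stated. Invoking the covering homotopy extension property (Theorem~\ref{thm:TCHEP}) controls only the time-$0$ face and the $A$-part: applied with $X=\D{n}$, $A=\D[-]{n-1}$, $f=\phi_0$, $h$ the constant map at $e$, and $k=G$, it yields an $H$ with $H(\cdot,0)=\phi_0$ and $pH=G$, but gives no control on $H(\cdot,1)$; there is no reason for $H(\cdot,1)$ to equal (or even be fiberwise homotopic rel $\D[-]{n-1}$ to) $\phi_1$. The other route---decomposing the inclusion $\D{n}\times\{0,1\}\cup\D[-]{n-1}\times\I\hookrightarrow\D{n}\times\I$ into iterated $k_m$-attachments---is plausible but is exactly the content that has to be proved, and in the diffeological setting this is not a formality; the paper never establishes such a cell decomposition and indeed avoids needing it.

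The paper's actual maneuver is different and is the missing key idea: instead of lifting the homotopy $G$ directly, it first collapses the constant part. It forms $V_n$, the quotient of $\D{n}\times\{0,1\}\cup\D[-]{n-1}\times I$ that collapses $\D[-]{n-1}\times I$ to $\D[-]{n-1}$, observes that $\phi_0$ and $\phi_1$ (both constant at $e$ on $\D[-]{n-1}$) glue to a single map $\phi_0\vee\phi_1\colon V_n\to E$, and puts a quotient diffeology on $V_n$ making $g_n\colon\D{n}\to V_n$ a subduction, so that one obtains one map $\phi_0/\phi_1\colon\D{n}\to E$. The corresponding collapse of $\D{n}\times I$ is identified with $\D{n+1}$ glued to $V_n$ along $\D{n}$, so the entire lifting problem reduces to a single square of the form $k_n\colon\D{n}\to\D{n+1}$ against $p$, which is solvable by the definition of fibration. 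You should adopt this folding/quotient reduction; without it your injectivity step is incomplete.
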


\begin{proof}
  The fact that every element of $\pi_n(B,b)$ has a tame
  representative enables us to apply Theorem~\ref{thm:WCHEP} to show
  that
  \[
    p_* \colon \pi_n(E,F,e) \to \pi_n(B,b,b) = \pi_n(B,b)
  \]
  is bijective for $n \geq 1$.
  The desired exact sequence is obtained from the homotopy exact
  sequence for the pair $(E,F)$ together with the evident exact
  sequence
  $\pi_0(F,e_0) \xrightarrow{i_*} \pi_0(E,e_0) \xrightarrow{p_*}
  \pi_0(B,b_0)$.
\end{proof}

\begin{ex}
  \label{ex:irrational torus}
  Let $\theta$ be an irrational number.  The irrational torus
  $\mathbb{T}_{\theta}$ of slope $\theta$ is defined to be the
  quotient of the 2-torus $\mathbb{T}^2$ by the $1$-parameter subgroup
  $\R_{\theta} = \{[t,\theta t] \mid t \in \R\}$.
  By \cite[8.15]{Zem}, the natural map
  $p \colon \mathbb{T}^2 \to \mathbb{T}_{\theta}$ is a diffeological
  fiber bundle with contractible fiber $\R_{\theta}$.  By using the
  homotopy long exact sequence, we see that the map $p$ induces an
  isomorphism
  $\pi_n(\mathbb{T}^2,x) \cong \pi_n(\mathbb{T}_{\theta},p(x))$ for
  every $n \geq 0$ and $x \in \mathbb{T}^2$.  Thus $p$ is a weak
  homotopy equivalence, and hence $\mathbb{T}_{\theta}$ has the smooth
  fundamental group $\mathbb{Z}^2$.
\end{ex}

We now show that $\pi_n(X,x_0)$ can be identified with the set of
homotopy classes of smooth maps $(\bI^{n+1},e) \to (X,x_0)$, where
$e = (1,\cdots,1) \in \bI^{n+1}$.  Consider the commutative diagram
\[
  \xymatrix{%
    [I^n,\bI^n \semicolon X,x_0] & [\bI^{n+1},J^n \semicolon X,x_0]
    \ar[l]_-{i^*} \ar[r]^-{j^*} & [\bI^{n+1},e \semicolon X,x_0]
    \\
    [I^n/\bI^n,* \semicolon X,x_0] \ar[u]_-{\cong} & [\bI^{n+1}/J^n,*
    \semicolon X,x_0] \ar[l]_-{\cong} \ar[u]_-{\cong}
  }%
\]
induced by the evident inclusions and projections.  By the
commutativity of the left hand square, we see that $i^*$ is an
isomorphism.
To see that $j_*$ is an isomorphism, let us take a tame map
$f \colon (\bI^{n+1},e) \to (X,x_0)$.  Since $I^{n-1} \times \{1\}$ is
a deformation retract of $J^n$ and is contractible to $e$, there
exists a tame contracting homotopy $r \colon J^n \times I \to J^n$ of
$J^n$ onto $e$.
By applying Theorem~\ref{thm:WHEP} to the map $f$ and the homotopy
$f|J^n \circ r \colon J^n \times I \to X$, we obtain a homotopy
$f \simeq g$ relative to $e$ such that $g(J^n) = x_0$ holds.
Hence we have $[f] = j^*([g]) \in [\bI^{n+1},e \semicolon X,x_0]$,
implying that $j^*$ is surjective.  Similarly, we can show that $j^*$
is injective.  Thus we have the following.

\begin{lmm}
  \label{lmm:another definition of homotopy groups}
  For every $n \geq 0$ there is a natural isomorphism
  \[
    \pi_n(X,x_0) \cong [\bI^{n+1},e \semicolon X,x_0].
  \]
\end{lmm}

%
Suppose $f \colon (\bI^{n+1},e) \to (X,x_0)$ is a tame representative
of an element of $\pi_n(X,x_0)$, and $\ell \colon I \to X$ a tame path
from $x_0$ to $x_1$.
Then, by applying Theorem~\ref{thm:WHEP} to the tame homotopy
$e \times I \to X$ given by $l$, we obtain a homotopy $f \simeq g$
such that $g(e) = x_1$ holds.
Thus we can construct
\[
  \ell_{\sharp} \colon \pi_n(X,x_0) \to \pi_n(X,x_1)
\]
to be the map which takes $[f] \in \pi_n(X,x_0)$ to the class
$[g] \in \pi_n(X,x_1)$.

We leave it to the reader to verify the following.

\begin{prp}
  \label{prp:invariance under basepoint change}
  To every tame path $\ell \colon I \to X$ joining $x_0$ to $x_1$,
  there attached a group isomorphism
  $\ell_{\sharp} \colon \pi_n(X,A,x_0) \to \pi_n(X,x_1)$.
  If $\ell' \colon I \to X$ is another tame path joining $x_1$ to
  $x_2$ then we have
  $(\ell * \ell')_{\sharp} = \ell'_{\sharp} \circ \ell_{\sharp}$.
\end{prp}

As a final result of this section, we show that homotopy invariance
holds for certain type of adjunction spaces.

\begin{prp}
  \label{prp:adjunction inherits homotopy equivalence}
  Let $C$ be a cubical complex and $k \colon K \to C$ be the inclusion
  of a subcomplex.  Suppose $\phi \colon K \to Y$ is a tame map.  If
  $f \colon Y \to Z$ is a homotopy equivalence then so is the induced
  map $Y \cup_{(\phi,k)} C \to Z \cup_{(f\phi,k)} C$.
\end{prp}

We prove this by way of lemmas.  Given two smooth maps
$f \colon Y \to Z$ and $g \colon Y' \to Z$, we denote by $f \bigcup g$
the composition
\[
  \textstyle \nabla \circ (f \coprod g) \colon Y \coprod Y' \to Z
  \coprod Z \to Z,
\]
where $\nabla$ is the folding map of $Z \coprod Z$ onto $Z$.

\begin{lmm}
  \label{lmm:subduction of pushout}
  Let $\Phi \colon C \to Y$ be a smooth map and let
  $Z = Y \cup_{(\phi,k)} C$, where $\phi = \Phi|K\colon K \to Y$.  Let
  $i \colon Y \to Z$ be the inclusion.  Then the map
  \[
    \textstyle i \times 1 \bigcup \Phi \times 1 \colon Y \times I
    \coprod C \times I \to Z \times I
  \]
  is a subduction.
\end{lmm}

\begin{proof}
  Let $P \colon U \to Z \times I$ be a plot of $Z \times I$ given by
  $P(r) = (\sigma(r),\sigma'(r))$, and let $r \in U$.  Since $\sigma$
  is a plot of $Z$, there exists a plot $Q_1 \colon V \to Y$ such that
  $\sigma |V = i \circ Q_1$ holds, or a plot $Q_2 \colon V \to C$
  such that $\sigma|V = \Phi \circ Q_2$ holds.  In either case, we
  have
  \[
    \textstyle P|V = (i \times 1 \bigcup \Phi \times 1) \circ
    (Q_{\alpha},\sigma')|V,
  \]
  where $\alpha$ is either $1$ or $2$.  This means that
  $i \times 1 \bigcup \Phi \times 1$ is a subduction.
\end{proof}

Given a homotopy $h \colon K \times I \to Y$, its \emph{homotopy
  cylinder} $M(h)$ is defined to be the adjunction space
\[
  M(h) = (Y \times I) \cup_{(\tilde{h},k \times 1)} (C \times I)
\]
where $\tilde{h} \colon K \times I \to Y \times I$ %
is given by the formula, $\tilde{h}(v,t) = (h(v,t),t)$.  Let
\[
  \iota_{\alpha} \colon Y \cup_{(h_{\alpha},k)} C \to M(h) \quad
  (\alpha = 0,\, 1)
\]
be the inclusion induced by $Y \times \{\alpha\} \subset Y \times I$
and $C \times \{\alpha\} \subset C \times I$.

\begin{lmm}
  \label{lmm:retraction of homotopy cylinder}
  If $h \colon K \times I \to Y$ is a tame homotopy between $h_0$ and
  $h_1$ then there are deformation retractions
  $p_{\alpha} \colon M(h) \to Y \cup_{(h_{\alpha},k)} C \ (\alpha =
  0,\,1)$, and hence a homotopy equivalence
  $p_0 \circ \iota_1 \colon Y \cup_{(h_1,\,k)} C \simeq Y
  \cup_{(h_0,\,k)} C$.
\end{lmm}

\begin{proof}
  We assume that $C$ is a finite complex and show that a deformation
  retraction $p_0$ exists.  Clearly, this means that the lemma holds
  for all $C$ which is a disjoint union of finite complexes.
  As $h_0 \colon K \to Y$ is tame, there is a homotopy
  $g \colon C \times I \to C$ from the identity to a tame map
  $\rho \colon C \to C$ such that $h_0(g(v,t)) = h_0(v)$ holds for
  $(v,t) \in K \times I$ (cf.\ the proof of
  Proposition~\ref{prp:tameness of lifts}).
  But then, there exists by Theorem~\ref{thm:WHEP} a tame homotopy
  $H' \colon C \times I \to M(h)$ extending
  \[
    (\iota_0 \circ \Phi_0 \circ \rho) \cup (i \circ \tilde{h}) %
    \colon C \times \{0\} \cup K \times I \to M(h),
  \]
  where $\Phi_0$ is the natural map $C \to Y \cup_{(h_0,k)} C$ and
  $i \colon Y \times I \to M(h)$ is the inclusion.
  Now, let us choose $\epsilon > 0$ such that $H'$ is constant on
  $C \times [0,\epsilon]$ and define a smooth homotopy
  $H \colon C \times I \to M(h)$ by putting
  \[
    H(v,t) =
    \begin{cases}
      H'(v,t), & \epsilon/2 \leq t \leq 1
      \\
      \iota_0(\Phi_0(g(\lambda(4t/\epsilon)))), %
      & 0 \leq t \leq \epsilon/2.
    \end{cases}
  \]
  %
  Define $j \colon (Y \times I) \times I \to M(h)$ and
  $\Psi \colon (C \times I) \times I \to M(h)$ by putting
  \[
    j(y,t,s) = i(y,(1-s)t) \text{ and } \Psi(v,t,s) = H(v,(1-s)t),
  \]
  Then we have
  $j \circ (\tilde{h} \times 1) = \Psi \circ (k \times 1 \times 1)$.
  Hence we can apply Lemma~\ref{lmm:subduction of pushout} to obtain a
  homotopy $\tilde{H} \colon M(h) \times I \to M(h)$ such that
  $\tilde{H}_0$ is the identity and $\tilde{H}_1$ factors as a
  composition
  \[
    M(h) \xrightarrow{p_0} Y \cup_{(h_0,\,k)} C
    \xrightarrow{\iota_0} M(h).
  \]
  It follows that $p_0 \colon M(h) \to Y \cup_{(h_0,\,k)} C$ is a
  deformation retraction.
\end{proof}

The next lemma is inspired by the construction given in
\cite[Chapter~7]{Brown}.

\begin{lmm}
  \label{lmm:homotopy induces homotopy equivalence}
  Let $h_0$ and $h_1$ be tame maps from $K$ to $Y$, and let
  $[h_0 \semicolon h_1]$ be the set of homotopy classes of homotopies
  between $h_0$ and $h_1$ relative to end maps.  Then there is a
  correspondence
  \[
    \gamma_{h_0,h_1} \colon [h_0 \semicolon h_1] \to %
    [Y \cup_{(h_1,\,k)} C \semicolon Y \cup_{(h_0,\,k)} C]
  \]
  enjoying the following properties:
  \begin{enumerate}
  \item to any tame homotopy $h$ between $h_0$ and $h_1$ there exists
    a homotopy equivalence
    $\gamma \colon Y \cup_{(h_1,\,k)} C \to Y \cup_{(h_0,\,k)} C$
    satisfying $\gamma_{h_0,h_1}([h]) = [\gamma]$;
  \item if $h$ and $h'$ are respective homotopies $h_0 \simeq h_1$ and
    $h_1 \simeq h_2$ then we have
    $\gamma_{h_0,h_2}([h * h']) = \gamma_{h_0,h_1}([h]) \circ
    \gamma_{h_1,h_2}([h'])$;
  \item if $c_f$ is the constant homotopy of $f \colon K \to Y$ then
    $\gamma_{f,f}([c_f])$ is the class of the identity of
    $Y \cup_{(f,\,k)} C$;
  \item for any map $f \colon K \to Y$ and homotopy
    $g \colon Y \times I \to Y$ we have
    \[
      \gamma_{g_0 f,g_1 f}([h]) \circ [\bar{g}_1] = [\bar{g}_0]
    \]
    where $h$ is the composition
    $g (f \times 1) \colon K \times I \to Y$ and $\bar{g}_{\alpha}$ is
    the map $Y \cup_{(f,\,k)} C \to Y \cup_{(g_{\alpha} f,\,k)} C$
    induced by $g_{\alpha} \colon Y \to Y\ (\alpha = 0,\, 1)$.
  \end{enumerate}
\end{lmm}

\begin{proof}
  By applying Lemma~\ref{lmm:retraction of homotopy cylinder} to the
  homotopy cylinder $M(h)$, we can construct a homotopy equivalence
  \[
    \gamma(h) = p_0 \circ \iota_1 \colon Y \cup_{(h_1,g)} C \to Y
    \cup_{(h_0,g)} C.
  \]
  To see that $\gamma(h)$ depends only on the homotopy class of $h$,
  let us take another tame homotopy $h'$ from $h_0$ to $h_1$, and let
  $G$ be a homotopy $h \simeq h'$ relative to end maps.
  Let $\iota'_1 \colon Y \cup_{(h_1,k)} C \to M(h')$ and
  $p'_0 \colon M(h') \to Y \cup_{(h_0,k)} C$ denote, respectively, the
  inclusion and the retraction arising from $h'$.

  We need to show that $\gamma(h') = p'_0 \circ \iota'_1$ is homotopic
  to $\gamma(h)$.  Let
  \[
    W(G) = Y \times I \times I \cup_{(\bar{G},k \times 1 \times 1)} C
    \times I \times I,
  \]
  where $\tilde{G} \colon K \times I \times I \to Y \times I \times I$
  is given by $\tilde{G}(v,t,u) = (G(v,u,t),u,t)$.
  Since $\tilde{G}(v,0,u) = (\tilde{h}(v,u),0)$ and
  $\tilde{G}(v,1,u) = (\tilde{h}'(v,u),1)$ hold, there are inclusions
  $\eta \colon M(h) \to W(G)$ and $\eta' \colon M(h') \to W(G)$.
  It is now clear that there are two deformation retractions of $W(G)$
  onto $Y \cup_{(h_{\alpha},k)} C$, one factors through
  $p_0 \colon M(h) \to Y \cup_{(h_{\alpha},k)} C$ and another factors
  through $p'_0 \colon M(h') \to Y \cup_{(h_{\alpha},k)} C$.
  Evidently, this implies that $\gamma(h')$ is homotopic to
  $\gamma(h)$.
  Therefore, there exists a well-defined map
  \[
    \gamma_{h_0,h_1} \colon [h_0 \semicolon h_1] \to [Y \cup_{(h_1,k)}
    C \semicolon Y \cup_{(h_0,k)} C]
  \]
  given by $\gamma_{h_0,h_1}([h]) = [\gamma(h)]$, which by its
  definition satisfies the property (1).
  It is a routine task to verify the rest of the properties.
\end{proof}

\begin{proof}[Proof of Proposition~\ref{prp:adjunction inherits homotopy
    equivalence}]
  Let $g \colon W \to Z$ be a homotopy inverse to $f$.  Then we have a
  commutative diagram consisting of pushout squares:
  \[
    \xymatrix@C=16pt{%
      Z \ar[r]^-{f} \ar[d]^-{i} & W \ar[r]^-{g} \ar[d]^-{j} & Z
      \ar[r]^-{f} \ar[d]^-{i'} & W \ar[d]^-{j'}
      \\
      Z \cup_{(\phi,k)} C \ar[r]^-{\bar{f}} & W \cup_{(f\phi,k)} C
      \ar[r]^-{\bar{g}} & Z \cup_{(gf\phi,k)} C \ar[r]^-{\bar{f'}} & W
      \cup_{(fgf\phi,k)} C.  }%
  \]
  By the property (4) of Lemma~\ref{lmm:homotopy induces homotopy
    equivalence}, there exist homotopy equivalences
  \[
    \gamma \colon Z \cup_{(gf\phi,k)} C \to Z \cup_{(\phi,k)} C, \ \
    \gamma' \colon W \cup_{(fgf\phi,k)} C \to W \cup_{(f\phi,k)} C,
  \]
  induced by the respective homotopies $1 \simeq gf$ and
  $1 \simeq fg$, such that $\gamma \bar{g} \bar{f} \simeq 1$ and
  $\gamma' \bar{f}' \bar{g} \simeq 1$ hold.  But this means that
  $\bar{g}$ has a right homotopy inverse $\bar{f}\gamma$ and a left
  homotopy inverse $\gamma' \bar{f}'$.  Hence $\bar{g}$ is a homotopy
  equivalence and so is $\bar{f}$.
\end{proof}

\section{Model category of diffeological spaces}
\label{sec:model category}
In this section we shall show that the category $\Diff$ has a model
structure by arguing as in the proof of \cite[Proposition 8.3]{Spa}.

\begin{dfn}
  \label{dfn:admissibility}
  Suppose $K$ is a cubical subcomplex of $I^n$ and
  $0 < \epsilon \leq 1/2$.  A smooth map $f \colon K \to X$ is said to
  be \emph{$\epsilon$-admissible} if it is $\epsilon^{j+1}$-tame on
  each $j$-dimensional face of $K$.  
\end{dfn}

Clearly, $\epsilon$-tameness implies $\epsilon$-admissibility, and
conversely, $\epsilon$-admissibility implies
$\epsilon^{\dim{K}+1}$-tameness.
Since smooth maps are homotopic to tame maps, we can convert any
smooth map into an admissible one.  But in the sequel we require a
more sophisticated way of doing this.

\begin{prp}
  \label{prp:admissible replacement}
  Suppose $f \colon K \to X$ is $\epsilon$-admissible on a cubical
  subcomplex $L$ of $K$.  Then there is a homotopy $f \simeq g$
  relative to $L$ such that $g$ is $\epsilon$-admissible.
\end{prp}

\begin{proof}
  It is easy to construct a homotopy $f \simeq f' \rel{L}$ such that
  $f'$ is $\sigma$-tame if $\sigma < \epsilon^{\dim{L}+1}$ (cf.\
  Lemma~\ref{lmm:taming of maps}).  Hence we may assume from the
  beginning that $f$ is a tame map.
  For $0 \leq j \leq \dim K$, let $\bar{K}^j = L \cup K^j$ be the
  union of $L$ and the $j$-skeleton of $K$.
  Starting from the constant homotopy
  $\tilde{h}^0 \colon \bar{K}^0 \times I \to X$, we inductively
  construct a tame homotopy
  $\tilde{h}^j \colon \bar{K}^j \times I \to X$ from $f|\bar{K}^j$ to
  an $\epsilon$-admissible map $g^j$ relative to $L$.

  Suppose $\tilde{h}^{j-1}$ exists.  Let $F$ be a $j$-dimensional face
  not contained in $L$.  As $\partial F \subset \bar{K}^{j-1}$, there
  is a tame map
  \(
    h^F \colon (\partial F \times I) \cup (F \times \{0\}) \to X,
  \)
  which takes $(t,u)$ to $\tilde{h}^{j-1}(t,u)$ if $t \in \partial F$
  and to $f(t)$ if $u = 0$.
  But as $g^{j-1}|\partial F$ is $\epsilon^j$-tame and
  $(\partial F \times I) \cup (F \times \{0\})$ is linearly
  diffeomorphic to $J^{j-1}$, we can apply Proposition~\ref{prp:tame
    maps are extendable} with sufficiently small $\sigma$ and
  $\sigma' = \epsilon^{j+1}$ to obtain a tame extension
  $\tilde{h}^F \colon F \times I \to X$ such that
  $g^F = \tilde{h}^F|F \times \{1\} \colon F \to X$ is
  $\epsilon$-admissible.
  Thus, if we define $\tilde{h}^j \colon \bar{K}^j \times I \to X$ to
  be the union $\bigcup_F \tilde{h}^F$, where $F$ runs through
  $j$-dimensional faces of $K$ not contained in $L$, then
  $\tilde{h}^j$ gives a tame homotopy $f|\bar{K}^j \simeq g^j \rel{L}$
  such that $g^j = \bigcup_F g^F$ is $\epsilon$-admissible.  This
  proves the induction step.
\end{proof}

Especially, we have the following.

\begin{crl}
  \label{crl:admissible maps are extendable}
  Any $\epsilon$-admissible map $f \colon L^{n-1} \to X$ can be
  extended to an $\epsilon$-admissible map $I^n \to X$.
\end{crl}

\begin{dfn}
  \label{dfn:K-fibration}
  Let $\K$ be either $\I$ or $\J$.
  A smooth map $f \colon X \to Y$ is called a \emph{$\K$-fibration} if
  for every member $k_n \colon K^{n-1} \to I^n$ of $\K$
  and every pair of $\epsilon$-admissible maps
  $r \colon K^{n-1} \to X$ and $s \colon I^n \to Y$ satisfying
  $f \circ r = s \circ k_n$, there exists an $\epsilon$-admissible
  lift $h \colon I^n \to X$ which makes the two triangles in the
  diagram below commutative:
  \[
    \vcenter{%
      \xymatrix{%
        K^{n-1} \ar[d]^{k_n} \ar[r]^r & X \ar[d]^f
        \\
        I^n \ar[r]^s \ar@{.>}[ru]^{h} & Y. }}%
  \]
  
  A smooth map $f \colon X \to Y$ is called a \emph{$\K$-cofibration}
  if it has the left lifting property with respect to $\K$-fibrations,
  that is, for every commutative square
  \begin{equation*}
    \label{eqn:commutative square}
    \vcenter{%
      \xymatrix@C=32pt{%
        X \ar[d]^f \ar[r] & E \ar[d]^p
        \\
        Y \ar@{.>}[ur] \ar[r] & B }}%
  \end{equation*}
  such that $p \colon E \to B$ is a $\K$-fibration, there exists a
  lift $Y \to E$ making the two triangles commutative.

  A $\K$-fibration or a $\K$-cofibration $f \colon X \to Y$ is called
  to be \emph{trivial} if it is a weak homotopy equivalence, that is,
  the induced map $f_{\ast} \colon \pi_n(X,x) \to \pi_n(Y,f(x))$ is a
  bijection for every $x \in X$ and $n \geq 0$.
\end{dfn}

\begin{ex}
  The class of $\J$-fibrations contains many useful weak fibrations
  including the ones listed in Example~\ref{ex:weak_fibrations}.
  In fact, every constant map $X \to *$ is a $\J$-fibration by
  Corollary~\ref{crl:admissible maps are extendable}, and so is any
  diffeological fiber bundle.
  To see that $p \colon P(X,x_0) \to X$ is a $\J$-fibration, let us
  take an $\epsilon$-admissible pair $u \colon L^{n-1} \to P(X,x_0)$
  and $v \colon I^n \to X$.  
  Let $K = L^{n-1} \times I \cup I^n \times \{0,1\}$ and
  $u' \colon K \to X$ be a tame map which takes $(t,s) \in K$ to
  $u(t)(s)$ if $t \in L^{n-1}$, to $v(t)$ if $s = 0$, and to $x_0$ if
  $s = 1$.
  To obtain an $\epsilon$-admissible lift $I^n \to P(X,x_0)$ of $u$,
  it suffices to extend $u'$ to a tame map
  $\tilde{u} \colon I^n \times I \to X$ which is $\epsilon$-admissible
  with respect to the first $n$ coordinates.  We accomplish this by
  extending $u'$ in several steps.
  Let $A = I^n(\epsilon^{n+1}) \times I$,
  $B = I^{n-1}(\epsilon^{n+1}) \times [\epsilon^{n+1},1] \times I$,
  and $C = I^n - \Int{B}$.
  As we have $C = K \cup (\bar{L}^{n-1} \times I)$, where
  $\bar{L}^{n-1}$ is the closure of the $\epsilon^{n+1}$-neighborhood
  of $L^{n-1}$, and $v$ is $\epsilon^{n+1}$-tame, $u'$ can be extended
  to a tame map $\tilde{u}' \colon C \to X$ in an evident manner.
  But as $(A, A \cap C) \cong (I^{n+1},L^n)$ and $\tilde{u}'$ is tame
  on $A \cap C$, there exists an extension
  $\tilde{u}'' \colon A \to X$ of $\tilde{u}'|A \cap C$ having enough
  tameness on $A \cap (I^{n-1} \times \{1-\epsilon^{n+1}\} \times I)$
  (cf.\ Proposition~\ref{prp:tame maps are extendable}).
  It is now clear that $\tilde{u}''$ can be extended trivially to
  $\tilde{u}''' \colon B \to X$, and the resulting map
  $\tilde{u} = \tilde{u}' \cup \tilde{u}''' \colon I^{n+1} = C \cup B
  \to X$ extends $u'$ and is $\epsilon$-admissible with respect to the
  first $n$ coordinates.
\end{ex}

\begin{thm}
  \label{thm:model structure}
  The category $\Diff$ has a structure of a model category in the
  sense of \cite[Definition 3.3]{Spa}, where a smooth map
  $f \colon X \to Y$ is
  \begin{enumerate}
  \item a weak equivalence if $f$ is a weak homotopy equivalence,
  \item a fibration if $f$ is a $\J$-fibration, and
  \item a cofibration if $f$ is an $\I$-cofibration.
  \end{enumerate}
\end{thm}


We shall prove Theorem~\ref{thm:model structure} by verifying the
following axioms (cf.~\cite{Spa}).

\begin{itemize}
\item[\bf MC1] Finite limits and colimits exist.
\item[\bf MC2] If $f$ and $g$ are maps such that $g \circ f$ is
  defined and if two of the three maps $f$, $g$, $g \circ f$ are weak
  equivalences, then so is the third.
\item[\bf MC3] If $f$ is a retract of $g$ and $g$ is a fibration,
  cofibration, or a weak equivalence, then so is $f$.
\item[\bf MC4] Given a commutative square of the form
  \begin{equation*}
    \vcenter{%
      \xymatrix@C=32pt{%
        A \ar[r] \ar[d]^-{i} & X \ar[d]^-{p}
        \\
        B \ar[r] \ar@{.>}[ru] & Y }}%
  \end{equation*}
  the dotted arrow exists so as to make the two triangles commutative
  if either (i) $i$ is a cofibration and $p$ is a trivial fibration,
  or (ii) $i$ is a trivial cofibration and $p$ is a fibration.
\item[\bf MC5] Any map $f$ can be factored in two ways: (i)
  $f = p \circ i$, where $i$ is a cofibration and $p$ is a trivial
  fibration, and (ii) $f = p \circ i$, where $i$ is a trivial
  cofibration and $p$ is a fibration.
\end{itemize}

Axiom \textbf{MC1} follows from the fact that $\Diff$ has small limits
and colimits, 
and \textbf{MC2} follows from the functoriality of induced maps
combined with the change of basepoint homomorphism
(Proposition~\ref{prp:invariance under basepoint change}).  Axiom
\textbf{MC3} is straightforward from the definitions (cf.\
\cite[8.10]{Spa}).  In order to verify \textbf{MC4} and \textbf{MC5},
we need several lemmas and propositions.

We say that a diffeological space $X$ has the weak diffeology with
respect to its covering $\{X_j\}_{j \in J}$ if it satisfies the
following condition: a parameterization $P \colon U \to X$ is a plot
of $X$ if and only if there is an element $j \in J$ such that $P$ is
written locally as the composition of a plot of $X_j$ with the
inclusion $X_j \to X$.

\begin{lmm}\label{lmm:weak diffeology of subduction}
  Let $\delta$ be an ordinal and $X \colon \delta \to \Diff$ a
  $\delta$-sequence of inclusions.  Then $\colim X$ has
  the weak diffeology with respect to the covering consisting of the
  images of $X_{\alpha}\ (\alpha < \delta)$.
\end{lmm}

\begin{proof}
  Let $\pi \colon \coprod_{\alpha < \delta} X_{\alpha} \to \colim X$
  be the natural map.  Given a plot $P \colon U \to \colim X$ of
  $\colim X$ and $r$ in $U$, there exist an open neighborhood $V$ of
  $r$ and a plot $Q \colon V \to \coprod_{\alpha < \delta}X_{\alpha}$
  of $\coprod_{\alpha < \delta}X_{\alpha}$ such that
  $P|V = \pi \circ Q$ holds.  Then, by the definition of sum
  diffeology, there exist an open neighborhood $W$ of $r$ and a plot
  $Q^{\prime} \colon W \to X_{\beta}$ of $X_{\beta}$ such that $Q|W$
  is the composition of $Q'$ with the inclusion
  $i_{\beta} \colon X_{\alpha} \to \coprod_{\alpha <
    \delta}X_{\alpha}$.  Thus we have
  \[
    P|W= \pi \circ Q|W= \pi \circ i_{\beta} \circ Q^{\prime},
  \]
  showing that $\colim X$ has the weak diffeology with respect
  to the covering consisting of the images of
  $\pi \circ i_{\alpha} \colon X_{\alpha} \to \colim X$.
\end{proof}

\begin{prp}\label{prp:finite cell}
  The spaces $I^n,\ \bI^n$ and $L^{n-1}$ are finite relative to
  inclusions. 
\end{prp}

\begin{proof}
  Suppose $X \colon \delta \to \Diff$ is a $\delta$-sequence of
  inclusions.  We need only show that if $f \colon I^n \to \colim X$
  is smooth then its image is contained in some $X_{\alpha}$.  To see
  this, put $g = f \circ \lambda^n \colon \R^n \to \colim X$.  Then $g$
  is a plot of $\colim X$, and we have $f(I^n) = g(I^n)$ because
  $\lambda^n(I^n)$ covers the image $\lambda^n$.
  By Lemma \ref{lmm:weak diffeology of subduction}, there exist for
  any $v \in I^n$ an open neighborhood $V_v$ of $v$ and a plot
  $P_v \colon V_v \to X_{\alpha(v)}$ such that $g|V_v$ coincides
  with the composition of $P_v$ with the inclusion
  $X_{\alpha(v)} \to \colim X$.  Since
  $I^n \subset \cup_{v \in I^n}V_v$ and $I^n$ is compact, there
  exist $v_1,\, \cdots,\, v_k \in I^n$ satisfying
  \[
    \textstyle f(I^n) = g(I^n) \subset \bigcup_{1 \leq j \leq k}
    X_{\alpha(v_k)}.
  \]
  Thus by putting
  $\alpha = \mbox{max}\{\alpha(v_k) \mid 1 \leq j \leq k \}$ we have
  $f(I^n) \subset X_{\alpha}$.
\end{proof}

\begin{crl}
  \label{crl:weak homotopy equivalence}
  Let $X \colon \delta \to \Diff$ be a $\delta$-sequence of inclusions
  that are also weak equivalences.  Then the inclusion
  $X_0 \to \colim X$ is a weak equivalence.
\end{crl}

\begin{proof}
  Let $i_{\alpha} \colon X_{\alpha} \to \colim X$ and
  $j_{\alpha} \colon X_0 \to X_{\alpha}$ denote the natural maps.  We
  need to show that the induced map
  $i_{0 \ast} \colon \pi_n(X_0,x) \to \pi_n(\colim X,i_0(x))$
  is bijective for every $n \geq 0$ and $x \in X_0$.

  Suppose $n = 0$.  Then, for any $y \in \colim X$ there is an
  $\alpha$ such that $y$ is contained in $X_{\alpha}$.  But, as
  $j_{\alpha}$ is a weak equivalence, there exists a point $x \in X_0$
  contained in the path component of $y$.  Thus
  $i_{0 \ast} \colon \pi_0 X_0 \to \pi_0 \colim X$ is surjective.
  That $i_{0 \ast}$ is injective follows from the preceding corollary.

  Suppose now that $g$ is an element of $\pi_n(\colim X,i_0(x))$,
  where $n \geq 1$.  Then, by Proposition \ref{prp:finite cell}, there
  exists a map
  $\gamma \colon (I^n,\bI^n) \to (X_{\alpha},j_{\alpha}(x))$ such
  that $g = i_{\alpha *}([\gamma])$ holds.  Since
  $j_{\alpha} \colon X_0 \to X_{\alpha}$ is a weak equivalence,
  there is an element $[\tilde{\gamma}] \in \pi_n(X_0,x)$
  satisfying $j_{\alpha \ast}([\tilde{\gamma}]) = [\gamma]$.  Thus we
  have
  \[
    g = i_{\alpha \ast}([\gamma]) %
    = i_{\alpha \ast}(j_{\alpha \ast}([\tilde{\gamma}])) %
    = i_{\alpha \ast} j_{\alpha \ast}([\tilde{\gamma}]) %
    = i_{0\ast}([\tilde{\gamma}])
  \]
  implying that $i_{0 \ast}$ is surjective.  That $i_{0 \ast}$ is
  injective is proved similarly.
\end{proof}

\begin{prp}
  \label{prp:trivial J-fibration is I-fibration}
  Let $p \colon X \to Y$ be a smooth map between diffeological spaces.
  Then $p$ is an $\I$-fibration if and only if it is a trivial
  $\J$-fibration.
\end{prp}

\begin{proof}
  We first show that an $\I$-fibration $p \colon X \to Y$ is a weak
  equivalence, that is, the induced map
  $p_{\ast}\colon \pi_n(X,x) \to \pi_n(Y,p(x))$ is bijective for
  every $n \geq 0$ and $x \in X$.  Let
  $\gamma \colon (I^n, \bI^n) \to (Y,p(x))$ be a tame map,
  and let $c_x \colon \bI^n \to X$ be the constant map with value
  $x \in X$.  Then we have a commutative square
  \[
    \xymatrix@C=32pt{%
      \bI^n \ar[r]^-{c_x} \ar[d]^-{i_n} & X \ar[d]^-{p}
      \\
      I^n \ar[r]^-{\gamma} & Y.
    }%
  \]
  Since $(c_x,\gamma)$ is $\epsilon$-admissible for some
  $\epsilon > 0$, there is a lift $\tilde{\gamma} \colon I^n \to X$
  satisfying $\tilde{\gamma} \circ i_n = c_x$ and
  $p \circ \tilde{\gamma} = \gamma$.  Thus we have
  $p_{\ast}([ \tilde{\gamma}]) = [\gamma]$, implying that $p_{\ast}$
  is a surjection.
  To see that $p_*$ is injective, let $\gamma_0$ and $\gamma_1$ be
  tame maps $(I^n,\bI^n) \to (X,x)$ such that
  $p_{\ast}([\gamma_0]) = p_{\ast}([\gamma_1])$ holds in
  $\pi_n(Y,p(x))$.
  Then there exists a tame homotopy $H \colon I^n \times I \to Y$
  between $p \circ \gamma_0$ and $p \circ \gamma_1$ relative to
  $\bI^n$.
  Let $\gamma \colon \bI^{n+1} \to X$ be a tame map which takes
  $(t,s)$ to $\gamma_s(t)$ if
  $(t,s) \in I^n \times \{0,1\}$, and to $x$ if
  $(t,s) \in \bI^n \times I$.  Then we have a commutative square
  \[
    \xymatrix{%
      \bI^{n+1} \ar[r]^-{\gamma} \ar[d]^-{i_{n+1}}& X \ar[d]^-{p}
      \\
      I^n \times I \ar[r]^-{H} & Y
    }%
  \]
  Hence there exists a lift $\tilde{H} \colon I^n \times I \to X$
  which gives a homotopy $\gamma_0 \simeq \gamma_1$.  Thus we have
  $[\gamma_0] = [\gamma_1]$, showing that $p_{\ast}$ is injective.

  To see that $p$ is a $\J$-fibration, take an $\epsilon$-admissible
  pair consisting of $f \colon L^{n-1} \to X$ and
  $g \colon I^n \to Y$.  Then we have a commutative square
  \[
    \xymatrix@C=60pt{%
      \bI^{n-1} \times \{1\} \ar[d]^{i_{n-1}} \ar[r]^-{f|\bI^{n-1}
        \times \{1\}} & X \ar[d]^p
      \\
      I^{n-1} \times \{1\} \ar[r]^-{g|I^{n-1}\times \{1\}} & Y.  }%
  \]
  Since $f|\bI^{n-1} \times \{1\}$ and $g|I^{n-1}\times \{1\}$ are
  $\epsilon$-admissible, there is an $\epsilon$-admissible lift
  $\tilde{g} \colon I^{n-1} \times \{1\} \to X$, and consequently we
  can define $\tilde{f} \colon \bI^n \to X$ to be the union
  $f \cup \tilde{g} \colon L^{n-1} \cup I^{n-1} \times \{1\} \to X$.
  Clearly, $\tilde{f}$ is $\epsilon$-admissible, and hence there
  exists an $\epsilon$-admissible lift $G \colon I^n \to X$ satisfying
  $p \circ G = g$ and $G \circ i_n = \tilde{f}$.  But this means
  $G|L^{n-1} = f$, implying that $p$ is a $\J$-fibration.

  Conversely, suppose $p \colon X \to Y$ is a trivial $\J$-fibration.
  Let $f \colon \bI^n \to X$ and $g \colon I^n \to Y$ be an
  $\epsilon$-admissible pair.  We need to show that there is an
  $\epsilon$-admissible lift $G \colon I^n \to X$ satisfying
  $p \circ G = g$ and $G \circ i_n = f$.
  Let $e = (1,\cdots,1) \in I^n$ and $x = f(e)$.  Since
  $p \circ f = g|\bI^n$ is null homotopic and $p$ is a weak
  equivalence, there exists by Lemma~\ref{lmm:another definition of
    homotopy groups} a tame homotopy
  $F \colon (\bI^n,\{e\}) \times I \to (X,x)$ from the constant map to
  $f$.
  Here, $F$ can be taken to be $\epsilon$-admissible by
  Proposition~\ref{prp:admissible replacement}.
  Let us define $H \colon J^n \to Y$ by
  \[
  H(t,s) =
  \begin{cases}
    g(t), & (t,s) \in I^n \times \{1\}.
    \\
    p(F(t,s)), & (t,s) \in \bI^n \times I
  \end{cases}
  \]
  Since $H$ is $\epsilon$-admissible, it can be extended by
  Corollary~\ref{crl:admissible maps are extendable} to an
  $\epsilon$-admissible homotopy $H' \colon I^n \times I \to Y$ from
  $\gamma' \colon (I^n,\bI^n) \to (Y,p(x))$ to $g$.
  But as $p$ is a weak equivalence, there exist a tame map
  $\gamma \colon (I^n, \bI^n) \to (X,x)$ and a tame homotopy
  $H'' \colon (I^n,\bI^n) \times I \to (Y,p(x))$ from $p \circ \gamma$
  to $\gamma'$.
  Again, we may assume by Proposition~\ref{prp:admissible replacement}
  that $\gamma$ and $H''$ are $\epsilon$-admissible.
  Let $\tau$ be such that $\epsilon^2 < \tau < 1/2$, and define a
  bijection $\theta \colon [1/2,1] \to [0,1]$ by
  \[
    \theta(s) = T_{\epsilon^2,\tau}(s) - 1/2 +
    T_{1/2-\tau,1/2-\epsilon^2}(s-1/2).
  \]
  Then we can define $K \colon L^n \to X$ and
  $G' \colon I^n \times I \to Y$ by the formula,
  \begin{align*}
    K(t,s)
    &=
      \begin{cases}
        F(t,\theta(s)), & (t,s) \in\bI^n \times [1/2,1]
        \\
        x, & (t,s) \in \bI^n \times [0,1/2]
        \\
        \gamma(t), & (t,s) \in I^n \times \{0\},
      \end{cases}
    \\                 
    G'(t,s)
    &=
      \begin{cases}
        H'(t,\theta(s)), & 1/2 \leq s \leq 1.
        \\
        H''(t,1 - \theta(1-s)), & 0 \leq s \leq 1/2.
      \end{cases}
  \end{align*}
  As we have $1 - \theta(1-s) = s$ for $s \leq \epsilon^2$ and
  $\theta(s) = s$ for $s \geq 1-\epsilon^2$, both $K$ and $G'$ are
  $\epsilon$-admissible and $p \circ K = G' \circ j_{n+1}$ holds.
  Hence there is an $\epsilon$-admissible lift
  $\widetilde{G} \colon I^n \times I \to X$ satisfying
  $p \circ \widetilde{G} = G'$ and $\widetilde{G}|J^n = K$, and
  consequently, we have an $\epsilon$-admissible lift
  $G = \widetilde{G}|I^n \times \{1\} \colon I^n \to X$ satisfying
  $G \circ i_n = f$ and $p \circ G = g$.
\end{proof}

Proposition~\ref{prp:trivial J-fibration is I-fibration} says that $f$
is a cofibration if and only if it satisfies the left lifting property
with respect to trivial $\J$-fibrations.  Hence we have

\begin{crl}
  \label{crl:MC4 (ii)}
  Axiom\/ {\bf MC4} holds under the condition (i).
\end{crl}

We now introduce \emph{infinite gluing construction} similar to that
of \cite{Spa} by employing the terminology of \emph{relative $\K$-cell
  complex}.

\begin{dfn}
  \label{dfn:I-cell complex}
  Let $\K = \{k_n \colon K^{n-1} \to I^n\}$ be either $\I$ or $\J$.
  A smooth map $f \colon X \to Y$ is called a \emph{relative $\K$-cell
    complex} if there is an ordinal $\delta$ and a $\delta$-sequence
  $Z \colon \delta \to \Diff$ such that $f \colon X \to Y$ coincides
  with the composition $Z_0 \to \colim Z$ and that for each $\beta$
  such that $\beta + 1 < \delta$, there is a pushout square
  \[
    \vcenter{%
      \xymatrix{%
        \bI^{d_{\beta + 1}} \ar[d]^{k_{d_{\beta + 1}}}
        \ar[r]^-{\phi_{\beta + 1}}
        & Z_{\beta} \ar[d]
        \\
        I^{d_{\beta + 1}} \ar[r]^-{\Phi_{\beta + 1}} & Z_{\beta+1},
      }%
    }%
  \]
  that is, $Z_{\beta+1}$ is an adjunction space
  $Z_{\beta} \cup_{(\phi_{\beta + 1},\,k_{d_{\beta + 1}})} I^{d_{\beta
      + 1}}$ for some $d_{\beta + 1} \geq 0$.
  %
\end{dfn}

\begin{prp}
  \label{prp:factorization by infinite gluing}
  Given $f \colon X \to Y$, there is a commutative diagram
  \[
    \xymatrix@C=20pt@R=20pt{%
      X \ar@{=}[d] \ar[r] %
      & F^1(\K,f) \ar@<0.5ex>[d]^{\tau_1} \ar[r] %
      & F^2(\K,f) \ar@<0.5ex>[d]^{\tau_2} \ar[r] %
      & \cdots \ar[r] %
      & F^l(\K,f) \ar@<0.5ex>[d]^{\tau_l} \ar[r] %
      & \cdots %
      \\ %
      X \ar[d]^f \ar[r] %
      & G^1(\K,f) \ar@<0.5ex>[u]^{\sigma_1} \ar[d]^{p_1} \ar[r] %
      & G^2(\K,f) \ar@<0.5ex>[u]^{\sigma_2} \ar[d]^{p_2} \ar[r] %
      & \cdots \ar[r] %
      & G^l(\K,f) \ar@<0.5ex>[u]^{\sigma_l} \ar[d]^{p_l} \ar[r] %
      & \cdots %
      \\ %
      Y \ar@{=}[r] %
      & Y \ar@{=}[r] %
      & Y \ar@{=}[r] %
      & \cdots \ar@{=}[r] %
      & Y \ar@{=}[r] %
      & \cdots %
    }%
  \]
  such that the following hold:
  \begin{enumerate}
  \item Each arrow in the upper row is a relative $\K$-cell complex,
    and hence so is the composition $i'_l \colon X \to F^l(\K,f)$ for
    every $l \geq 1$.
  \item Each arrow in the middle row is an inclusion, and hence so is
    the composition $i_l \colon X \to G^l(\K,f)$ for every $l \geq 1$.
  \item The maps $\tau_l$ and $\sigma_l$ are homotopy inverse to each
    other.
  \item The colimit $i_{\infty} \colon X \to G^{\infty}(\K,f)$ of the
    maps $i_l$ is a $\K$-cofibration.
  \item The map $p_{\infty} \colon G^{\infty}(\K,f) \to Y$ induced by
    the maps $p_l$ is a $\K$-fibration.
  \end{enumerate}
\end{prp}

To prove this, we need the following lemma.

\begin{lmm}
  \label{lmm:universal admissible space}
  For any $\epsilon$ satisfying $0 < \epsilon \leq 1/2$, there is a
  subduction $\rho_{\epsilon} \colon I^n \to \tI^n_{\epsilon}$
  enjoying the following properties:
  Any smooth map $f \colon I^n \to X$ which is $\epsilon$-admissible
  on $K^{n-1}$ is $\epsilon$-admissible on $I^n$ if and only if it
  factors through $\rho_{\epsilon}$.
  Moreover, $\rho_{\epsilon}$ has a homotopy inverse
  $\iota_{\epsilon} \colon \tI^n_{\epsilon} \to I^n$ such that
  $\iota_{\epsilon} \circ \rho_{\epsilon} \simeq 1
  \rel{I^n(\epsilon^n)}$ and
  $\rho_{\epsilon} \circ \iota_{\epsilon} \simeq 1
  \rel{\rho_{\epsilon}(I^n(\epsilon^n))}$ hold.
\end{lmm}

\begin{proof}
  Define a smooth map $\rho_{\epsilon} \colon I^n \to I^n$ by the
  formula:
  \[
    \rho_{\epsilon}(t_1,\,\cdots\,,t_{n-1},u) =
    (T_{a(u),b(u)}(t_1),\,\cdots\,,T_{a(u),b(u)}(t_{n-1}),
    T_{\epsilon^{n+1},\epsilon^n}(u)),
  \]
  where $(a(u),b(u)) \equiv (\epsilon^{n+1},\epsilon^n)$ if
  $K^{n-1} = \bI^n$, while in the case of $K^{n-1} = L^{n-1}$, $a(u)$
  and $b(u)$ are taken to be non-decreasing functions satisfying
  $(a(u),b(u)) = (\epsilon^{n+1},\epsilon^n)$ for
  $0 \leq u \leq 1 - \epsilon^{n+1}$ and
  $(a(1),b(1)) = (\epsilon^n,\epsilon^{n-1})$ (cf.\ the proof of
  Proposition~\ref{prp:tame maps are extendable}).
  Let $\tI^n_{\epsilon}$ be the $n$-cube equipped with the pushforward
  by $\rho_{\epsilon}$ of the standard diffeology of $I^n$.  Then it
  follows by definition that
  $\rho_{\epsilon} \colon I^n \to \tI^n_{\epsilon}$ is a subduction.

  Suppose $f \colon I^n \to X$ is $\epsilon$-admissible on $K^{n-1}$.
  If $f$ factors through $\rho_{\epsilon}$ then $f$ is
  $\epsilon$-admissible on $I^n$ because $\rho_{\epsilon}$ is
  $\epsilon^{n+1}$-tame on $I^n$ and, additionally, is
  $\epsilon^n$-tame on $I^{n-1} \times \{1\}$ when $K^{n-1} = L^n$.
  On the other hand, if $f$ is $\epsilon$-admissible then we can
  define $\tilde{f} \colon \tI^n_{\epsilon} \to X$ by putting
  $\tilde{f}(t) = f(\rho_{\epsilon}^{-1}(t))$, which is well-defined
  because $\rho_{\epsilon}$ is bijective on $I^n(\epsilon^{n+1})$ and
  $f$ is $\epsilon^{n+1}$-tame.
  Thus we have a factorization $f = \tilde{f} \circ \rho_{\epsilon}$,
  which in turn implies $\tilde{f}$ is smooth because
  $\rho_{\epsilon}$ is a subduction.

  Now, let $\iota_{\epsilon} \colon \tI^n_{\epsilon} \to I^n$ be the
  identity map.  Then $\iota_{\epsilon}$ is smooth because the
  diffeology of $\tI^n_{\epsilon}$ is finer than the standard
  diffeology.
  Moreover, if we put
  \[
    h(t,u) = (1-u)\rho_{\epsilon}(t) + ut,\ \ (t,u) \in I^n \times I
  \]
  then $h \colon I^n \times I \to I^n$ gives
  $\iota_{\epsilon} \circ \rho_{\epsilon} \simeq 1$ rel
  $I^n(\epsilon^n)$.
  While on the other hand, if we define
  $\tilde{h} \colon \tI^n_{\epsilon} \times I \to \tI^n_{\epsilon}$ by
  the formula
  $\tilde{h}(t,u) = \rho_{\epsilon}(h(\rho_{\epsilon}^{-1}(t),u))$
  then $\tilde{h}$ is smooth because
  $\rho_{\epsilon} \circ h = \tilde{h} \circ (\rho_{\epsilon} \times
  1)$ holds and $\rho_{\epsilon} \times 1$ is a subduction.
  Clearly, $\tilde{h}$ gives a homotopy
  $\rho_{\epsilon} \circ \iota_{\epsilon} \simeq 1$ relative to
  $\rho_{\epsilon}(I^n(\epsilon^n))$.
\end{proof}

\begin{proof}[Proof of Proposition~\ref{prp:factorization by infinite
    gluing}]
  We proceed by induction on $l$.
  For $l = 0$, we take $F^0(\K,f) = G^0(\K,f) = X$,
  $i_0 = i'_0 = \sigma_0 = \tau_0 = 1$, and $p_0 = p'_0 = f$.
  Suppose $F^{l-1}(\K,f)$ and $G^{l-1}(\K,f)$ exist, and let
  $S(k_n,p_{l-1})$ be the set of pairs of tame maps
  $u \colon K^{n-1} \to G^{l-1}(\K,f)$ and $v \colon I^n \to Y$
  satisfying $p_{l-1} \circ u = v \circ k_n$.
  Let
  \[
    \textstyle
    C = \coprod_{n \geq 0}\coprod_{(u,v) \in S(k_n,p_{l-1})}I^n, \ \
    K = \coprod_{n \geq 0}\coprod_{(u,v) \in S(k_n,p_{l-1})}K^{n-1},
  \]
  and put
  \(
    F^l(\K,f) = F^{l-1}(\K,f) \cup_{(\sigma_{l-1}\bm{u},\,\bm{k})} C,
  \)
  where $\bm{k} \colon K \to C$ is the inclusion and
  $\bm{u} \colon K \to G^{l-1}(\K,f)$ is the union of the maps
  $u \colon K^{n-1} \to G^{l-1}(\K,f)$ for $(u,v) \in S(k_n,p_{l-1})$.
  More explicitly, $F^l(\K,f)$ is the union of adjunction spaces
  $F^{l-1}(\K,f) \cup_{(\sigma_{l-1}u,\,k_n)} I^n$.  Thus, with a
  suitable choice of total order on $\coprod_{n \geq 0}S(k_n,p_{l-1})$
  the inclusion $F^{l-1}(\K,f) \to F^l(\K,f)$ is a relative $\K$-cell
  complex, and hence so is the composition
  $i'_l \colon X \to F^l(\K,f)$.

  To construct $G^l(\K,f)$, choose for each $(u,v) \in S(k_n,p_{l-1})$
  the largest constant $\epsilon$ such that both $u$ and $v$ are
  $\epsilon$-admissible ($0 < \epsilon \leq 1/2$), and denote by
  $\rho_{u,v} \colon I^n \to \tI^n_{u,v}$ the subduction
  $\rho_{\epsilon} \colon I^n \to \tI^n_{\epsilon}$.
  Let $\tilde{K}^{n-1}_{u,v} = \rho_{u,v}(K^{n-1})$ and
  $\tilde{k}_n \colon \tilde{K}^{n-1}_{u,v} \to \tI^n_{u,v}$ be the
  inclusion.
  Since $u$ and $v$ are $\epsilon$-admissible, there exist smooth maps
  $\tilde{u} \colon \tilde{K}^{n-1}_{u,v} \to G^{l-1}(\K,f)$ and
  $\tilde{v} \colon \tI^n_{u,v} \to Y$ such that
  $u = \tilde{u} \circ \rho_{u,v}$ and
  $v = \tilde{v} \circ \rho_{u,v}$ hold (Lemma~\ref{lmm:universal
    admissible space}).
  Now, let
  \[
    \textstyle \tilde{C} = %
    \coprod_{n \geq 0}\coprod_{(u,v) \in S(k_n,p_{l-1})}\tI^n_{u,v},
    \ \ %
    \tilde{K} = \coprod_{n \geq 0}\coprod_{(u,v) \in S(k_n,p_{l-1})}
    \tilde{K}^{n-1}_{u,v},
  \]
  and put
  $G^l(\K,f) = G^{l-1}(\K,f) \cup_{(\bm{\tilde{u}},\,\bm{\tilde{k}})}
  \tilde{C}$, that is, the union of adjunction spaces
  $G^{l-1}(\K,f) \cup_{(\tilde{u},\,\tilde{k}_n)} \tI^n_{u,v}$.
  %
  As we have $p_{l-1} \circ \tilde{u} = \tilde{v} \circ \tilde{k}_n$,
  there is a map
  $(p_l)_{u,v} \colon G^{l-1}(\K,f) \cup_{(\tilde{u},\,\tilde{k}_n)}
  \tI^n_{u,v} \to Y$ making the diagram below commutative:
  \begin{equation}
    \label{diagram:adding a cell to G}
    \vcenter{%
      \xymatrix{%
        K^{n-1} \ar[d]^-{k_n} \ar[r]^-{\rho_{u,v}}
        & \tilde{K}^{n-1}_{u,v} \ar[d]^-{\tilde{k}_n}
        \ar[r]^-{\tilde{u}}
        & G^{l-1}(\K,f) \ar[d]^-{\subset} \ar[r]^-{p_{l-1}}
        & Y \ar@{=}[d]
        \\
        I^n \ar[r]^-{\rho_{u,v}}
        & \tI^n_{u,v} \ar[r]^-{\tilde{\Phi}_{u,v}}
        \ar@/_16pt/[rr]_-{\tilde{v}}
        & G^{l-1}(\K,f) \cup_{(\tilde{u},\,\tilde{k}_n)}
        \tI^n_{u,v} \ar[r]^-{(p_l)_{u,v}} & Y.
      }
    }%
  \end{equation}
  Thus we have an inclusion $G^{l-1}(\K,f) \to G^l(\K,f)$ and
  $p_l \colon G^l(\K,f) \to Y$ defined as the union of the maps
  $(p_l)_{u,v}$ above.

  We define $\tau_l \colon F^l(\K,f) \to G^l(\K,f)$ and
  $\sigma_l \colon G^l(\K,f) \to F^l(\K,f)$ to be the respective
  compositions $\tau''_l \circ \tau'_l$ and
  $\sigma'_l \circ \sigma''_l$ in the diagram below.
  For brevity, we abbreviate $F^{l-1}(\K,f)$ to $F^{l-1}$ and
  $G^{l-1}(\K,f)$ to $G^{l-1}$.
  \begin{equation*}
    \vcenter{%
      \xymatrix@C=24pt{%
        F^{l-1} \cup_{(\sigma_{l-1}\bm{u},\,\bm{k})} C
        \ar@<0.5ex>[r]^-{\tau'_l}
        & G^{l-1} \cup_{(\bm{u},\,\bm{k})} C
        \ar@<0.5ex>[l]^-{\sigma'_l} \ar@<0.5ex>[r]^-{\tau''_l}
        & G^{l-1} \cup_{(\bm{\tilde{u}},\,\bm{\tilde{k}})} \tilde{C}
        \ar@<0.5ex>[l]^-{\sigma''_l} 
      }
    }%
  \end{equation*}
  Here $\sigma'_l$ is the map induced by $\sigma_{l-1}$ and $\tau'_l$
  is the composition
  \[
    F^{l-1} \cup_{(\sigma_{l-1}\bm{u},\,\bm{k})} C
    \xrightarrow{\overline{\tau_{l-1}}} %
    G^{l-1} \cup_{(\tau_{l-1}\sigma_{l-1}\bm{u},\,\bm{k})} C
    \xrightarrow{\gamma} %
    G^{l-1} \cup_{(\bm{u},\,\bm{k})} C,
  \]
  where $\overline{\tau_{l-1}}$ is induced by $\tau_{l-1}$ and
  $\gamma$ is a homotopy equivalence given by
  Lemma~\ref{lmm:retraction of homotopy cylinder}.  As we have shown
  in the proof of Proposition~\ref{prp:adjunction inherits homotopy
    equivalence}, $\sigma'_l$ and $\tau'_l$ are homotopy inverse to
  each other.
  On the other hand, $\tau''_l$ and $\iota''_l$ are induced by the
  subductions $\rho_{u,v} \colon I^n \to \tI^n_{u,v}$ and their
  homotopy inverses $\iota_{u,v} \colon \tI^n_{u,v} \to I^n$ given by
  Lemma~\ref{lmm:universal admissible space}.
  But if $u$ is $\epsilon$-admissible then $u$ is $\epsilon^n$-tame,
  and hence the homotopies
  $\iota_{u,v} \circ \rho_{u,v} \simeq 1 \rel{I^n(\epsilon^n)}$ and
  $\rho_{u,v} \circ \iota_{u,v} \simeq 1
  \rel{\rho_{u,v}(I^n(\epsilon^n))}$ induce homotopy equivalence
  $G^{l-1}(\K,f) \cup_{(u,\,k_n)} I^n \simeq G^{l-1}(\K,f)
  \cup_{(\tilde{u},\,\tilde{k}_n)} \tI^n_{u,v}$ relative to
  $G^{l-1}(\K,f)$.
  Thus $\tau''_l$ and $\sigma''_l$ are homotopy inverse to each other,
  and so are the compositions $\tau_l$ and $\sigma_l$.

  We next show that $i_{\infty} \colon X \to G^{\infty}(\K,f)$ is a
  $\K$-cofibration.
  Let $q \colon E \to B$ be a $\K$-fibration, and let
  $r \colon X \to E$ and $s \colon G^{\infty}(\K,f) \to B$ be smooth
  maps satisfying $q \circ r = s \circ i_{\infty}$.
  Let us write $s_l = s|G^l(\K,f)$ for $l \geq 1$.
  Starting from $h_0 = r$, we shall inductively construct a lift
  $h_l \colon G^l(\K,f) \to E$ 
  such that the following diagram commutes.
  \begin{equation}
    \label{diagram:LLP for G}
    \vcenter{%
      \xymatrix{%
        X \ar[d]^-{i_l} \ar[r]^-{r} & E \ar[d]^-{q}
        \\
        G^l(\K,f) \ar[r]^-{s_l} \ar[ru]^{h_l} & B.}}%
  \end{equation}
  Suppose $h_{l-1}$ exists.  Then for every $(u,v) \in S(k_n,p_{l-1})$
  we have a commutative diagram (in which we abbreviate $G^l(\K,f)$ to
  $G^l$)
  \begin{equation}
    \label{diagram:i_infty is a cofibration}
    \vcenter{%
      \xymatrix{%
        K^{n-1} \ar[d]^-{k_n} \ar[r]^-{\rho_{u,v}} %
        & \tilde{K}^{n-1}_{u,v} \ar[d]^-{\tilde{k}_n}
        \ar[r]^-{\tilde{u}} & G^{l-1} \ar[d] \ar@{=}[r]
        & G^{l-1} \ar[d] \ar[r]^-{h_{l-1}}
        & E \ar[d]^-{q}
        \\
        I^n \ar[r]^-{\rho_{u,v}} \ar@{.>}[urrrr] %
        & \tI^n_{u,v} \ar[r]^-{\tilde{\Phi}_{u,v}}
        & G^{l-1} \cup_{(\tilde{u},\,\tilde{k}_n)} 
        \tI^n_{u,v} \ar[r]^-{\subset} 
        & G^l \ar[r]^-{s_l} & B.
      }%
    }%
  \end{equation}
  Since the upper and lower horizontal compositions are
  $\epsilon$-admissible, there is an $\epsilon$-admissible lift
  $h'_{u,v} \colon I^n \to E$ making the diagram commutative.
  But then $h'_{u,v}$ induces a unique map
  $h''_{u,v} \colon \tI_{u,v} \to E$ such that
  $h'_{u,v} = h''_{u,v} \circ \rho_{u,v}$, and hence
  $h''_{u,v} \circ \tilde{k}_n = h_{l-1} \circ \tilde{u}$ hold.
  Consequently, there exists a lift
  $\tilde{h}_{u,v} \colon G^{l-1}(\K,f)
  \cup_{(\tilde{u},\,\tilde{k}_n)} \tI^n_{u,v} \to E$ for every
  $(u,v) \in S(k_n,p_{l-1})$ such that
  \[
    h_l = \textstyle\bigcup_{n \geq 0} \bigcup_{(u,v) \in
      S(k_n,p_{l-1})} \tilde{h}_{u,v} \colon G^l(\K,f) \to E
  \]
  makes the diagram (\ref{diagram:LLP for G}) commutative.
  Now, by taking the colimit as $l \to \infty$ we obtain a lift
  $h_{\infty} \colon G^{\infty} \to E$ satisfying
  $h_{\infty} \circ i_{\infty} = r$ and $q \circ h_{\infty} = s$.

  To see that $p_{\infty}$ is a $\K$-fibration, let
  $r \colon K^{n-1} \to G^{\infty}(\K,f)$ and $s \colon I^n \to Y$ be
  an $\epsilon$-admissible pair, and $l$ be an integer such that the
  image of $r$ is contained in $G^l(\K,f)$
  (Proposition~\ref{prp:finite cell}).  Then there is a commutative
  diagram
  \begin{equation}
    \label{diagram:p_infty is a fibration}
    \vcenter{%
      \xymatrix{%
        K^{n-1} \ar[d]^-{k_n} \ar[r]^-{r} & G^l(\K,f)
        \ar[d]^-{p_l} \ar[r]^-{i_{l+1}} & G^{l+1}(\K,f)
        \ar[d]^-{p_{l+1}} \ar[r] & G^{\infty}(\K,f)
        \ar[d]^-{p_{\infty}}
        \\
        I^n \ar[r]^-{s} \ar@{.>}[urr] & Y \ar@{=}[r] & Y \ar@{=}[r]
        & Y.
      }%
    }%
  \end{equation}
  As $(r,s) \in S(k_n,p_l)$, there is an $\epsilon$-admissible lift
  $I^n \to G^{\infty}(\K,f)$ given as the composition
  \[
    I^n \xrightarrow{\rho_{r,s}} \tI^n_{r,s}
    \xrightarrow{\tilde{\Phi}_{r,s}} %
    G^l(\K,f) \cup_{(\tilde{r},\,\tilde{k}_n)}\tI^n_{r,s}
    \xrightarrow{\subset} G^{l+1}(\K,f) \to G^{\infty}(\K,f).
  \]
  This means $p_{\infty}$ is a $\K$-fibration, completing the proof
  the proposition.
\end{proof}

In the case $\K = \J$ the following additional property hold.

\begin{lmm}
  \label{lmm:deformation retract of cell}
  For any $f \colon X \to Y$, the map
  $i_{\infty} \colon X \to G^{\infty}(\J,f)$ is a weak equivalence.
\end{lmm}

\begin{proof}
  By Corollary~\ref{crl:weak homotopy equivalence}, it suffices to
  show that the inclusion $G^k(\J,f) \to G^{k+1}(\J,f)$ is a weak
  equivalence for every $k \geq 0$, or equivalently, so is
  $F^k(\J,f) \to F^{k+1}(\J,f)$.
  But this follows from the fact that if $u \colon L^{n-1} \to W$ is
  tame then there is a deformation retraction of the adjunction space
  $Z = W \cup_{(u,\,j_n)} I^n$ onto $W$.
  %
  To verify this, let $i \colon W \to Z$ be the
  inclusion and $\Phi \colon I^n \to Z$ be the
  natural map.  Suppose $u$ is $\epsilon$-tame for some
  $\epsilon > 0$, and define $H \colon I^n \times I \to I^n$ by
  \[
  H(t,s) = (1-s)t + sR_{\epsilon}(t),
  \]
  where $R_{\epsilon} \colon I^n \to L^{n-1}$ is an
  $\epsilon$-approximate retraction (cf.\ Lemma~\ref{lmm:approximate
    retraction exists}).
  Then the composition $F = \Phi \circ H \colon I^n \times I \to Z$
  satisfies
  \[
    F(t,0) = \Phi(t),\ \ F(t,1) \subset W,\ \ F(t,s) = i(u(t))\
    \text{if} \ (t,s) \in L^{n-1} \times I
  \]
  because $\Phi|L^{n-1} = i \circ u$ is $\epsilon$-tame.
  Hence the map $G \colon W \times I \coprod I^n \times I \to Z$,
  which takes $(x,s) \in W \times I$ to $i(x)$ and
  $(t,s) \in I^n \times I$ to $F(t,s)$, factors through the subduction
  $i \times 1 \bigcup \Phi \times 1 \colon W \times I \coprod I^n
  \times I \to Z \to Z \times I$
  (Lemma \ref{lmm:subduction of pushout}), which in turn induces a map
  $K \colon Z \times I \to Z$ making the diagram
  \[
  \xymatrix{%
    W \times I \coprod I^n \times I {%
      \ar[r]^-{G} \ar[d]_-{i \times 1 \bigcup \Phi \times 1} }%
    & Z
    \\
    Z \times I \ar[ru]_-{K} }%
  \]
  commutative.  It is easily verified that the homotopy $K$ satisfies
  $K_0 = 1_Z$, $K_1(Z) \subset W$, and $K_u|W = 1_W$
  ($0 \leq u \leq 1$).  Hence $W$ is a deformation retract of
  $Z = W \cup_{(f,j_n)} I^n$.
\end{proof}

By combining Propositions~\ref{prp:trivial J-fibration is I-fibration}
and \ref{prp:factorization by infinite gluing} we obtain \textbf{MC5}
(i).

\begin{prp}
  \label{prp:MC5(i)}
  In the factorization
  $X \xrightarrow{i_{\infty}} G^{\infty}(\I,f)
  \xrightarrow{p_{\infty}} Y$, the map $i_{\infty}$ is a cofibration
  and $p_{\infty}$ is a trivial fibration.
\end{prp}

On the other hand, Proposition~\ref{prp:factorization by infinite
  gluing} and Lemma~\ref{lmm:deformation retract of cell} imply
\textbf{MC5} (ii).

\begin{prp}
  \label{prp:J-factorization}
  In the factorization
  $X \xrightarrow{i_{\infty}} G^{\infty}(\J,f)
  \xrightarrow{p_{\infty}} Y$, the map $i_{\infty}$ is a trivial
  cofibration and $p_{\infty}$ is a fibration.
\end{prp}

Finally, we verify \textbf{MC4}~(ii) by making use of the preceding
proposition.

\begin{prp}
  Every trivial cofibration has the left lifting property with respect
  to fibrations.
\end{prp}

\begin{proof}
  Suppose $i \colon X \to Y$ is a trivial cofibration and
  $p \colon A \to B$ a fibration.
  Let $f \colon X \to A$ and $g \colon Y \to B$ be smooth maps such
  that $p \circ f = g \circ i$ holds.
  Let us take the factorization
  $i = p_{\infty} \circ i_{\infty} \colon X \to G^{\infty}(\J,i) \to
  Y$, where $i_{\infty}$ is a trivial cofibration and $p_{\infty}$ is
  a fibration.  Because $i$ and $i_{\infty}$ are weak equivalences,
  $p_{\infty}$ is a weak equivalence, and hence a trivial fibration.

  Now, consider the commutative square
  \[
    \xymatrix{%
      X \ar[d]^-{i} \ar[r]^-{i_{\infty}} & G^{\infty}(\J,i)
      \ar[d]^-{p_{\infty}}
      \\
      Y \ar@{=}[r] \ar@{.>}[ru] & Y.
    }%
  \]
  As $i$ is a cofibration, there exists a lift
  $h \colon Y \to G^{\infty}(\J,i)$ such that $p_{\infty} \circ h =1$
  and $ h\circ i = i_{\infty}$ hold (MC4~(i)).
  Hence we obtain a commutative diagram
  \[
    \xymatrix{%
      X \ar[d]^-{i} \ar@{=}[r] & X \ar@{=}[r] \ar[d]^-{i_{\infty}} & X
      \ar[d]^-{i} \ar[r]^-{f} & A \ar[d]^-{p}
      \\
      Y \ar[r]^-{h} & G^{\infty}(\J,i) \ar[r]^-{p_{\infty}}
      \ar@{.>}[rru] & Y \ar[r]^-{g} & B.
    }%
  \]
  As $i_{\infty}$ is a $\J$-cofibration, there exists a lift
  $g^{\prime} \colon G^{\infty}(\J,i) \to A$ making the diagram
  commutative.  Thus we obtain a desired left lift
  $g' \circ h \colon Y \to A$.
\end{proof}

This completes the proof of Theorem \ref{thm:model structure}.

\section{Quillen equivalence between \textbf{Diff} and \textbf{Top}}
We shall show that there exists a Quillen equivalence between the
model categories $\Diff$ and $\Top$.

Let us briefly recall the Quillen model structure on $\Top$.  Let
$D^n$ be the unit $n$-disk in $\R^n$, and let $S^{n-1}$ be the unit
$n$-sphere in $\R^n$.  Let $I'$ be the set of boundary inclusions
$S^{n-1} \to D^n$, $J$ the set of the inclusions
$D^n \times \{0\} \to D^n \times I$, and $W_{\Top}$ the class of weak
homotopy equivalences.

\begin{thm}[{\cite[2.4.19]{Hov}}]
  There exists a finitely generated model structure on $\Top$ with
  $I'$ as the set of generating cofibrations, $J$ as the set of
  generating trivial cofibrations, and $W_{\bf Top}$ as the class of
  weak equivalences.
\end{thm}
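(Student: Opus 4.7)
The plan is to mirror, \emph{mutatis mutandis}, the argument of Section~5, by invoking the recognition principle \cite[2.1.19]{Hov} and verifying the same five conditions (I)--(V) as were verified for $\Diff$, now with $I'$, $J$, and $W_{\Top}$ in place of $\mathcal{I}$, $\mathcal{J}$, and $\weak$. Denote by $i_n \colon S^{n-1} \to D^n$ the boundary inclusions and by $k_n \colon D^n \times \{0\} \to D^n \times I$ the generating trivial cofibrations.

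For (I), the two-out-of-three property and retract-closure of $W_{\Top}$ are classical, proved by the same conjugation-of-paths square used earlier (the requisite homotopy extension property is standard for relative CW-pairs). For (II) and (III), the spheres $S^{n-1}$ and the disks $D^n$ are compact, and any continuous map from a compact Hausdorff space into a transfinite composition of closed $T_1$-inclusions factors through a finite stage; this yields finiteness, hence smallness, of the domains and codomains of $I'$ and $J$ relative to the respective cell classes, in complete analogy with Proposition~\ref{prp:finiteness of domains and codomains}. For (V), each generating trivial cofibration $k_n$ is a strong deformation retract inclusion, so each pushout thereof is again a deformation retract inclusion, and a topological analogue of Corollary~\ref{crl:trivial composition is a weak equivalence} (via the same compactness argument) then shows that the transfinite composition is a weak homotopy equivalence. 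That every element of $J$-cell lies in $I'$-cof follows by the small-object argument together with the observation that contractibility of $D^n \times I$ onto $D^n \times \{0\}$ yields lifts of each $k_n$ against every $I'$-injection.

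The principal obstacle is condition (IV), namely $I'$-inj $= W_{\Top} \cap J$-inj. For ``$\subseteq$'' one proceeds as in Proposition~\ref{prp:trivial fibration}: the right lifting property against $i_n$ produces representatives of homotopy classes (giving surjectivity of $p_*$) and, applied to $i_{n+1}$, produces null-homotopies (giving injectivity of $p_*$). For the reverse containment I would follow Hovey \cite[2.4.12]{Hov} by forming
\[
P(i_n,p) = X^{S^{n-1}} \times_{Y^{S^{n-1}}} Y^{D^n}
\]
and the induced comparison map $Q(i_n,p) \colon X^{D^n} \to P(i_n,p)$; then $p \in I'$-inj iff $Q(i_n,p)$ is surjective for every $n$. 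Using the topological covering homotopy extension property (the classical analogue of Theorem~\ref{thm:TCHEP}), $Q(i_n,p)$ is seen to be a Serre fibration, and contractibility of $D^n$ makes $X^{D^n} \to Y^{D^n}$ a weak equivalence. A five-lemma argument applied to the homotopy long exact sequences of the path-space fibrations then forces $Q(i_n,p)$ to be a weak equivalence, in particular surjective, so that $p$ has the right lifting property against $i_n$. The hard step, as in the diffeological setting, is the technical verification of the CHEP needed to conclude that $Q(i_n,p)$ is a Serre fibration; once this is in hand the entire argument of Section~5 transfers verbatim.
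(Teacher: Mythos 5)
The theorem as it appears in the paper is not proved there at all: it is attributed verbatim to Hovey \cite[2.4.19]{Hov}, and the paper's Section~5 proof for $\Diff$ was itself modeled on Hovey's argument for $\Top$. Your proposal therefore amounts to reconstructing the cited proof from its diffeological adaptation, and the outline is essentially faithful to Hovey's: the recognition principle of \cite[2.1.19]{Hov}, compactness giving finiteness of the domains and codomains of $I'$ and $J$ relative to inclusions of $T_1$-spaces, deformation-retract inclusions handling $J\text{-cell} \subseteq W_{\Top}$, and the mapping-space argument via $Q(i_n,p)$ handling $I'\text{-inj} = W_{\Top} \cap J\text{-inj}$. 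So the proposal takes the same route as the source the paper cites; there is nothing to contrast within the paper itself.

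One small point worth making precise: for the containment $I'\text{-inj} \subseteq J\text{-inj}$ you should say explicitly that each $k_n \colon D^n \times \{0\} \to D^n \times I$ is a relative $I'$-cell complex (i.e.\ a relative CW-inclusion), so that the right lifting property against $I'$ propagates to $J$ via the usual transfinite/pushout argument — this is the step that in Section~5 corresponds to the implication (1)\,$\Rightarrow$\,(2) of Proposition~\ref{prp:trivial fibration}. And as you note, the genuine technical content is the pushout-product statement making $Q(i_n,p)$ a (trivial) Serre fibration; that is precisely \cite[2.4.12--2.4.14]{Hov}, whose diffeological analogue the paper had to re-prove from scratch (Theorem~\ref{thm:TCHEP}).
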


It follows that a continuous map $p \colon X \to Y$ between
topological spaces is a fibration in $\Top$ if it has the right
lifting property with respect to the inclusions
$D^n \times \{0\} \to D^n \times I$, and is a trivial fibration if it
has the right lifting property with respect to the inclusions
$S^{n-1} \to D^n$.

There is an adjunction $(T,D,\varphi)$ from $\Diff$ to $\Top$, where

\begin{enumerate}
\item the left adjoint $T \colon \Diff \to \Top$ takes a diffeological
  space to the topological space which has the same underlying set,
  but is equipped with the initial topology with respect to the plots
  of $X$;
\item the right adjoint $D \colon \Top \to \Diff$  takes a
  topological space to the diffeological space which has the same
  underlying set, but is equipped with the
  diffeology consisting of all continuous
  parameterizations;
\item $\varphi$ is a natural isomorphism $\hom_{\Top}(TX,Y) \cong
  \hom_{\Diff}(X,DY)$.
\end{enumerate}

\medskip

The main result of this section is the following.

\begin{thm}
  \label{thm:Quillen equivalence}
  The adjunction $(T,D,\varphi)$ induces a Quillen equivalence between
  the model categories $\Diff$ and $\Top$.
\end{thm}

To see that $(T,D,\varphi)$ is a Quillen adjunction, we need the
following lemma.

\begin{lmm}
  \label{lmm:TI^n is I^n}
  The topological spaces $TI^n$, $T\bI^n$, and $TL^{n-1}$ are
  respectively homeomorphic to the topological subspaces $I^n$,
  $\bI^n$, and $L^{n-1}$ of $\R^n$.
\end{lmm}

\begin{proof}
  It is clear that $T\R^n$ is homeomorphic to the topological space
  $\R^n$.  As $I^n$ is a convex subset of $\R^n$, we conclude that
  $TI^n$ and $I^n$ are homeomorphic by {\cite[Lemma 3.16]{DGE}},
  which in turn implies that $T\bI^n$ and $TL^{n-1}$ are
  homeomorphic to $\bI^n$ and $L^{n-1}$, respectively.
\end{proof}

\begin{prp}
  \label{prp:Quillen adjunction}
  $(T,D,\varphi)$ is a Quillen adjunction from $\Diff$ to $\Top$.
\end{prp}

\begin{proof}
  It suffices to show that the right adjoint $D \colon \Top \to \Diff$
  preserves fibrations and trivial fibrations by {\cite[Lemma
    1.3.4]{Hov}}.
  Suppose $p \colon X \to Y$ is a trivial fibration in $\Top$.  We
  shall show that $Dp \colon DX \to DY$ is a trivial fibration, or
  equivalently, an $\I$-fibration (cf.\ Proposition~\ref{prp:trivial
    J-fibration is I-fibration}).
  Suppose we are given an $\epsilon$-admissible pair given by
  $f \colon \partial I^n \to DX$ and $g \colon I^n \to DY$.
  Then we have a commutative diagram
  \[
    \xymatrix@C=40pt{%
      \partial TI^n \ar[r]^-{Tf} \ar[d]^-{i_n} %
      & TDX \ar[d]^-{TDp} \ar[r]^-{\varepsilon_X} & X \ar[d]^p %
      \\ %
      TI^n \ar[r]^-{Tg} \ar@{.>}[rru] %
      & TDY \ar[r]^-{\varepsilon_Y} & Y. %
    }%
  \]
  Since $(TI^n,T\partial I^n) \cong (D^n,S^{n-1})$ and $p$ is a
  trivial fibration, there exists a lift $G \colon TI^n \to X$
  satisfying $p \circ G = \varepsilon_Y \circ Tg$ and
  $G \circ i_n = \varepsilon_X \circ Tf$.
  Moreover, $G$ can be taken to be $\epsilon^n$-tame by composing, if
  necessary, with the map $c(\epsilon^n)^n \colon I^n \to I^n$, where
  $c(\epsilon^n) \colon I \to I$ is the cut-off function having value
  $\max\{\epsilon^n,\, t\}$ for $t \leq 1/2$ and
  $\min\{1-\epsilon^n,\, t\}$ for $t \geq 1/2$.

  Now, let us define a smooth map $\tilde{g} \colon I^n \to DX$ to be
  the composition
  \[
    I^n \xrightarrow{\eta} DTI^n \cong DI^n \xrightarrow{DG} DX.
  \]
  Then $\tilde{g}$ is an $\epsilon$-admissible lift satisfying
  $\tilde{g} \circ i_n = f$ and $Dp \circ \tilde{g} = g$, showing that
  $Dp \colon DX \to DY$ is an $\I$-fibration.  Hence $D$ preserves
  trivial fibrations.

  By arguing similarly, but with $\bI^n$ replaced $L^{n-1}$, we can
  show that $D$ also preserves fibrations.
\end{proof}


\begin{lmm}
  \label{lmm:cofibration is a retract}
  Any cofibration $i \colon A \to X$ is a retract of
  $i_{\infty} \colon A \to G^{\infty}(\I,i)$, that is, there is a
  commutative diagram
  \begin{equation}
    \label{diagram:retraction induced by cofibration}
    \vcenter{%
      \xymatrix{%
        A \ar@{=}[r] \ar[d]^-{i} & A \ar@{=}[r] \ar[d]^-{i_{\infty}}
        & A \ar[d]^-{i}
        \\
        X \ar[r]^-{h} & G^{\infty}({\I},i) \ar[r]^-{p_{\infty}} & X.
      }%
    }%
  \end{equation}
\end{lmm}

\begin{proof}
  Consider the commutative square
  \[
    \xymatrix{%
      A \ar[d]^-{i} \ar[r]^-{i_{\infty}} & G^{\infty}(\I,i)
      \ar[d]^-{p_{\infty}}
      \\
      X \ar@{=}[r] \ar@{.>}[ru] & X.
    }%
  \]
  Since $i$ is a cofibration and $p_{\infty}$ is a trivial fibration,
  there exists a lift $h \colon X \to G^{\infty}({\I},i)$ making the
  two triangles commutative.  Clearly, this means that the diagram
  (\ref{diagram:retraction induced by cofibration}) is commutative.
\end{proof}

\begin{dfn}
  \label{dfn:gathered I-cell complex}
  A diffeological space $X$ is called an \emph{$\I$-cell complex} if
  the map $\emptyset \to X$ is a relative $\I$-cell complex, and is
  called a \emph{gathered $\I$-cell complex} if all its attaching maps
  are tame.
\end{dfn}

By the definition, $F^l(\I,\emptyset \to X)$ is a gathered $\I$-cell
complex for every $X$ and $1 \leq l \leq \infty$.

\begin{prp}
  \label{prp:reduction to I-cell complexes}
  $(T,D,\varphi)$ is a Quillen equivalence if the natural map
  $X \to DTX$ is a weak equivalence for every gathered $\I$-cell
  complex $X$.
\end{prp}

\begin{proof}
  Since every objects in $\Top$ is fibrant, and since $D$ reflects
  weak equivalences, $(T,D,\varphi)$ is a Quillen equivalence if
  $X \to DTX$ is a weak equivalence for every cofibrant $X$ (cf.\
  \cite[Corollary~1.3.16]{Hov}).
  Thus, to prove the proposition it suffices to show that if
  $X \to DTX$ is a weak equivalence for every gathered $\I$-cell
  complex $X$ then so is for every cofibrant $X$.

  Suppose $X$ is cofibrant, that is, the map
  $i \colon \emptyset \to X$ is a cofibration.  Let
  $Z = G^{\infty}(\I,i)$ and let $h \colon X \to Z$ be the map
  satisfying $p_{\infty} \circ h = 1$ given by
  Lemma~\ref{lmm:cofibration is a retract}.  Then $h$ is a weak
  equivalence because so is $p_{\infty}$.
  Moreover, the map $Z \to DTZ$ is a weak equivalence because
  $G^l(\I,i)$ is homotopy equivalent to the gathered $\I$-cell complex
  $F^l(\I,i)$ and $DT$ preserves homotopies (cf.\
  Proposition~\ref{prp:T and D preserve homotopies}).
  Now, we have a commutative diagram
  \[
    \xymatrix@C=36pt{%
      \pi_n(X,x_0) \ar[r]^-{h_*}_-{\cong} \ar[d] %
      & \pi_n(Z,h(x_0)) \ar[r]^-{p_{\infty *}}_-{\cong}
      \ar[d]^-{\cong} %
      & \pi_n(X,x_0) \ar[d] %
      \\ %
      \pi_n(DTX,x_0) \ar[r]^-{DTh_*} %
      & \pi_n(DTZ,h(x_0)) \ar[r]^-{DTp_{\infty *}} %
      & \pi_n(DTX,x_0) %
    }%
  \]
  in which top horizontal arrows and middle vertical arrow are
  isomorphisms.
  By the commutativity of the left hand square, we see that
  $\pi_n(X,x_0) \to \pi_n(DTX,x_0)$ is a monomorphism.
  On the other hand, as we have
  \[
    DT(p_{\infty})_*\circ DTh_* = DT(p_{\infty} \circ h)_* =
    (1_{DTX})_* = 1_{\pi_n(DTX,x_0)},
  \]
  $DT(p_{\infty})_*$ is an epimorphism, hence so is
  $\pi_n(X,x_0) \to \pi_n(DTX,x_0)$ for any $x_0 \in X$, implying that
  $X \to DTX$ is a weak equivalence.
\end{proof}

Thus, Theorem~\ref{thm:Quillen equivalence} is a consequence of the
next proposition.

\begin{prp}
  \label{prp:the case of I-cell complexes}
  If $X$ is a gathered $\I$-cell complex then $X \to DTX$ is a weak
  equivalence.
\end{prp}

By adjointness, the smooth homotopy groups of $(DTX,x_0)$ are
naturally isomorphic to the continuous homotopy groups of $(TX,x_0)$.
Therefore, Proposition~\ref{prp:the case of I-cell complexes} follows
from the two statements below.
For brevity, we write ``$f \colon Z \to W$ is continuous'' to mean $f$
is a continuous map from $TZ$ to $TW$.  Similarly, if
$f \colon Z \to W$ is continuous and $A$ is a subspace of $Z$ then
``$f$ is tame on $A$'' means that $f|A \colon A \to W$ is smooth and
tame.

\begin{enumerate}
\item Any continuous map $f \colon (I^n,\bI^n) \to (X,x_0)$ is
  continuously homotopic to a tame map
  $g \colon (I^n,\bI^n) \to (X,x_0)$.
\item If there is a continuous homotopy between tame maps
  $f_0$ and
  $f_1$, then there exists a smooth homotopy
  $g \colon (I^n,\bI^n) \times I \to (X,x_0)$ such that $g_0 = f_0$
  and $g_1 = f_1$ hold.
\end{enumerate}
Clearly, these two statements are consequences of the next proposition.

\begin{prp}
  \label{prp:smooth approximation theorem}
  Let $X$ be a gathered $\I$-cell complex, and let
  $f \colon I^n \to X$ be a continuous map.  Then $f$ is continuously
  homotopic to a tame map $g \colon I^n \to X$.  If $f$ is tame on a
  cubical subcomplex $L$ of $I^n$ then the homotopy $f \simeq Tg$ can
  be taken to be relative to $L$.
\end{prp}

To prove this, we require several lemmas.

\begin{lmm}
  \label{lmm:smoothing continuous maps rel L}
  Let $f \colon I^n \to X$ be a continuous map which is tame on a
  cubical subcomplex $L$.  Suppose there is a tame map
  $g' \colon I^n \to X$ and a homotopy $f \simeq Tg'$ restricting to a
  tame homotopy $f|L \simeq g'|L$.
  Then there exists a homotopy $f \simeq Tg$ relative to $L$ such that
  $g \colon I^n \to X$ is tame.
\end{lmm}

\begin{proof}
  It is obvious that there is a homotopy $f \simeq f' \rel{L}$ such
  that $f'$ is tame as a continuous map (cf.\ Lemma~\ref{lmm:taming of
    maps}).  Hence we may assume from the beginning that $f$ is a
  continuous tame map.
  Let $h \colon I^n \times I \to X$ be a continuous homotopy from $f$
  to $g'$ which restricts to a tame homotopy $f|L \simeq g'|L$.  Then,
  by applying Theorem~\ref{thm:WHEP} to the tame map $g'$ and the tame
  homotopy $g'|L \simeq f|L$ given by the inverse to $h|L \times I$,
  we obtain a tame homotopy $g' \simeq g$ such that $g$ is tame and
  satisfies $g|L = f|L$.
  However, the composite homotopy $f \simeq g' \simeq g$ is not a
  homotopy relative to $L$ because its restriction to $L$
  is the composition of $f|L \simeq g'|L$ with its inverse
  $g'|L \simeq f|L$, and hence is not constant.

  To convert $f \simeq g$ into a homotopy relative to $L$, let
  $\mu(t) = \lambda(2t) - \lambda(2t - 1)$ and define
  $h' \colon L \times I \times I \to X$ by
  $h'(v,t,s) = h(v,(1-s)\mu(t))$.  Then $h'$ gives
  $f|L \simeq g'|L \simeq f|L$ on $L \times I \times \{0\}$ and the
  constant homotopy on $L \times I \times \{1\}$.
  Now, let $K = I^n \times \{0,1\} \cup L \times I$ and define
  $H \colon K \times I \to X$ by putting
  $H(v,0,s) = f(v),\ H(v,1,s) = g(v)$, and $H(v,t,s) = h'(v,t,s)$ for
  $v \in L$.
  Then, by the homotopy extension property we can extend $H$ to a
  continuous homotopy $G \colon I^n \times I \times I \to X$.
  Clearly, $G|I^n \times I \times \{1\}$ gives a continuous homotopy
  $f \simeq g \colon I^n \to X$ relative to $L$.
\end{proof}

We also need the classical Whitney approximation on manifolds.

\begin{lmm}[{\cite[Theorem 10.21]{Lee}}]
  \label{lmm:Whitney approximation on Manifolds}
  Let $N$ and $M$ be a smooth manifolds, and let $F \colon N \to M$ be
  a continuous map.  Then $F$ is homotopic to a smooth map
  $\widetilde{F} \colon N \to M$.  If $F$ is smooth on a closed subset
  $L \subset N$, then the homotopy can be taken to be relative to $L$.
\end{lmm}

We are now ready to prove Proposition~\ref{prp:smooth approximation
  theorem}.

\begin{proof}[Proof of Proposition~\ref{prp:smooth approximation
    theorem}] 
  Let $f \colon I^n \to X$ be a continuous map which is tame on a
  subcomplex $K \subset I^n$.  As $I^n$ is compact, $f(I^n)$ is
  contained in a finite subcomplex.  Thus we may assume $X$ is a
  finite $\I$-cell complex.

  Let $m$ be the number of cells of $X$, so that there is a finite
  sequence
  \[
    \emptyset = X_0 \to X_1 \to \cdots \to X_m = X
  \]
  such that for each $k \leqq m$ there are an integer $d_k \geq 0$ and
  a pushout square
  \[
    \vcenter{%
      \xymatrix{%
        \bI^{d_k} \ar[d] \ar[r]^-{\phi_k} & X_{k-1} \ar[d]
        \\
        I^{d_k} \ar[r]^-{\Phi_k} & X_k.  }%
    }%
  \]
  We shall prove the statement by induction on $m$.  If $m = 1$ then
  the statement holds because $X_1$ is a point.
  Let $m > 1$ and suppose the statement is true for any gathered
  $\I$-cell complex having less than $m$ cells.
  If $d_m = 0$ then $X$ is the disjoint union $X_{m-1} \coprod I^0$
  and the statement surely holds.
  Suppose $d_m > 0$ and $\phi_m$ is $\epsilon$-tame.  If we put
  $r = (T_{\epsilon/2,\epsilon})^{d_m} \colon I^{d_m} \to I^{d_m}$
  then we have $\phi_m((1-t)v + t\,r(v)) = \phi(v)$ for
  $(v,t) \in \bI^{d_m} \times I$ because $\phi_m$ is $\epsilon$-tame.
  Hence we can define $h \colon X \times I \to X$ by the formula
  \[
  h(x,t) =
  \begin{cases}
    \Phi_m((1-t)v + t\,r(v)), & x = \Phi_m(v) \in \Phi_m(I^{d_m})
    \\
    x, & x \in X_{m-1}.
  \end{cases}
  \]
  As $r$ is $\epsilon/2$-tame, we see that $h$ is smooth.
  Now, let
  \[
    U = \Phi_m(I^{d_m} - \bI^{d_m}) \ \ \text{and}\ \
    V = \Phi_m(I^{d_m} - [\epsilon,1 - \epsilon]^{d_m}) \cup X_{m-1}.
  \]
  Then we have the following.
  \begin{enumerate}
  \item The set $\{U,\, V\}$ is an open cover of $X$.
  \item $U$ is diffeomorphic to the euclidean space $\R^{d_m}$.
  \item There is a smooth homotopy $h \colon X \times I \to X$
    relative to $X_{m-1}$ such that $h_0$ is the identity and $h_1$
    gives a retraction of $V$ onto $X_{m-1}$.
  \end{enumerate}
  Let $\Sd_k(I^n)$ be the cubical subdivision of $I^n$ consisting of
  subcubes
  \[
  K_J = \left[\frac{j_1 - 1}{k},\frac{j_1}{k}\right] \times \cdots
  \times \left[\frac{j_n - 1}{k},\frac{j_n}{k}\right]
  \]
  where $J = (j_1,\, \cdots,\, j_n) \in \{1,\cdots,k\}^n$.
  By taking $k$ large enough, we may assume each $f(K_J)$ is contained
  in either $U$ or $V$.

  We construct a homotopy from $f$ to a tame map $g$ in several steps.

  \smallskip

  \emph{Step 1.} We construct a homotopy $f \simeq f'$ which restricts
  to a tame homotopy $f|L \simeq f'|L$ and is such that $f'$ is tame
  not only on $L$ but also on every subface $L \cap K_J$.
  Let $f'$ be the composition $f' = f \circ \xi$, where
  $\xi \colon I^n \to I^n$ is a tame map such that for every $J$ the
  restriction $\xi|K_J$ is identical with $T_{\sigma,\tau}^n$ under
  the evident homeomorphism $K_J \cong I^n$.  Then $f'$ is tame on
  $L \cap K_J$ and there is a homotopy $f \simeq f'$ induced by the
  smooth homotopy $1 \simeq \xi$ arising from a suitably chosen tame
  homotopy $1 \simeq T_{\sigma,\tau}$.

  \smallskip

  \emph{Step 2.} Let $M$ be the set of labels $J$ such that $f'(K_J)$
  is contained in $U$.  For every subset $M'$ of $M$, write
  $K(M') = \bigcup_{J \in M'} K_J \subset \Sd_k(I^n)$.
  We shall inductively construct a continuous homotopy
  \begin{equation}
    \label{eq:2-1}
    f'|K(M) \simeq g'_M \colon K(M) \to U \ \rel{L \cap K(M)}
  \end{equation}
  such that $g'_M$ is piecewise tame, that is, restricts to a tame map
  $K_J \to U$ for every $J \in M$.  Evidently, this means that $g'_M$
  is smooth all over $K(M)$.

  Let us endow $M$ with the lexicographical order, and denote
  \[
  M^J_0 = \{J' \in M \mid J' < J\}, \quad M^J = M^J_0 \cup \{J\}.
  \]
  for every $J \in M$.  Suppose there is a continuous homotopy
  \begin{equation}
    \label{eq:2-2}
    f'|K(M^J_0) \simeq g'_{M^J_0} \colon K(M^J_0) \to U \ \rel{L \cap K(M^J_0)}
  \end{equation}
  such that $g'_{M^J_0}$ is piecewise tame.
  To see that (\ref{eq:2-1}) exists, it suffices to show that the
  homotopy (\ref{eq:2-2}) can be extended to a homotopy
  \begin{equation}
    \label{eq:2-3}
    f'|K(M^J) \simeq g'_{M^J} \colon K(M^J) \to U \ \rel{L \cap K(M^J)}
  \end{equation}
  such that $g'_{M^J}$ is piecewise tame.
  By the homotopy extension property with respect to
  $(K_J, L \cap K_J \cap K(M^J_0))$, there exists a continuous homotopy
  \begin{equation*}
    f'|K_J \simeq \tilde{f}'_J \colon K_J \to U \ \rel{L \cap
      K_J},
  \end{equation*}
  which coincides with (\ref{eq:2-2}) on the subface
  $K(M^J_0) \cap K_J$.
  But as $U$ is diffeomorphic to $\R^{d_m}$, we can apply the Whitney
  approximation theorem (Lemma~\ref{lmm:Whitney approximation on
    Manifolds}) to get a homotopy
  \begin{equation}
    \label{eq:2-4}
    \tilde{f}'_J \simeq g'_J \colon K_J \to U \ \rel{(L \cup
      K(M^J_0)) \cap K_J}
  \end{equation}
  such that $g'_J$ is smooth.  Moreover, $g'_J$ can be taken to be
  tame by composing, if necessary, with (suitably rescaled)
  $T_{\sigma,\tau}^{d_m} \colon K_J \to K_J$ with $\tau$ sufficiently
  small.
  Thus, we get (\ref{eq:2-3}) by pasting (\ref{eq:2-2}) and
  (\ref{eq:2-4}) together, and hence (\ref{eq:2-1}) exists.

  \smallskip

  \emph{Step 3.} Let us write $\rho = h_1 \colon X \to X$, and put
  $f'' = \rho\circ f'$, $g''_M = \rho\circ g'_M$.  By Step 2, there is a
  continuous homotopy
  \begin{equation}
    \label{eq:3-1}
    f''|K(M) \simeq g''_M \colon K(M) \to X \ \rel{L \cap K(M)}
  \end{equation}
  obtained from (\ref{eq:2-1}) 
  by composing with $\rho$.

  If $J$ is not contained in $M$ then $f'(K_J)$ is contained in $V$,
  and hence $f''(K_J) = \rho(f'(K_J))$ is contained in $X_{m-1}$.  By
  the inductive assumption, we can apply Proposition~\ref{prp:smooth
    approximation theorem} to $f''|K_J$ (with $I^n$ and $L$ replaced
  by $K_J$ and $L \cap K_J$, respectively).
  Thus, by arguing as in the previous step, but using the assumption
  of the induction instead of the Whitney approximation, we can extend
  (\ref{eq:3-1}) to a continuous homotopy
  \begin{equation}
    \label{eq:3-2}
    f'' \simeq g'' \colon Sd_k(I^n) \to X \ \rel{L}
  \end{equation}
  such that $g''$ is piecewise tame.

  \smallskip

  \emph{Step 4.}  Thus we have a composite homotopy
  $f \simeq f' \simeq f'' \simeq g''$ such that $g''$ is piecewise
  tame with respect to $Sd_k(I^n)$.  But every piecewise tame map from
  $Sd_k(I^n)$ is tame as a map from $I^n$.  Hence we may regard $g''$
  as a tame map $I^n \to X$.  Moreover, the homotopy $f \simeq g''$ we
  have constructed restricts to a tame homotopy
  $f|L \simeq \rho \circ f|L$ on $L$.  Hence there exist by
  Lemma~\ref{lmm:smoothing continuous maps rel L} a tame map $g$ and a
  continuous homotopy $f \simeq g \rel{L}$.

  This completes the proof of the proposition.
\end{proof}

\section{Comparison with the model category $\NumG$}

Let $\NumG$ be the full subcategory of $\Top$ consisting of those
objects $Y$ such that the counit $TDY \to Y$ is a homeomorphism, and
let $\STop$ be the full subcategory of $\Diff$ consisting of those $X$
such that the unit $X \to DTX$ is a diffeomorphism.
As we have shown in \cite{SYH}, $\NumG$ can be identified with the
full subcategory of $\Top$ consisting of numerically generated (same
as $\Delta$-generated) spaces;
while on the other hand, $\STop$ can be identified with the category
of topological spaces with numerically continuous maps as morphisms.
As in \cite[Section~4]{SYH}, we denote by $\smap(X,Y)$ the set of
numerically continuous maps $X \to Y$ equipped with the topology such
that the following holds (Proposition~4.7 of \cite{SYH}):
\[
D\smap(X,Y) = \Cinfty(DX,DY).
\]
The category $\NumG$ is cartesian closed with respect to exponentials
$Y^X = \nu\smap(X,Y)$, where $\nu$ denotes the coreflection $TD \colon
\Top \to \NumG$.

It is clear that $T$ factors as a composition
$\Diff \to \NumG \to \Top$, and $D$ factors as
$\Top \to \STop \to \Diff$.  Clearly, $T$ and $D$ induce an inverse
equivalence between $\STop$ and $\NumG$.

\begin{prp}
  \label{prp:T and D preserve homotopies}
  Both $T \colon \Diff \to \NumG$ and $D \colon \Top \to \STop$
  preserve homotopies.
\end{prp}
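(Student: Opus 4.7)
The plan is to exhibit, in each case, a candidate homotopy in the target category whose underlying set-map is the same as that of the given homotopy, and then verify the required regularity (smoothness for $D$, continuity for $T$) on plots.

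For $D$, the verification is direct. Given a continuous homotopy $F \colon X \times [0,1] \to Y$ in $\Top$, I would define $G \colon DX \times \I \to DY$ by $G(x,t) = F(x,t)$. Any plot $(P_1, P_2) \colon U \to DX \times \I$ consists of a continuous $P_1 \colon U \to X$ (by definition of the diffeology on $DX$) and a plot $P_2 \colon U \to \I$; since $P_2$ is smooth, it is in particular continuous as a map into $[0,1]$. Hence $G \circ (P_1,P_2) = F \circ (P_1,P_2) \colon U \to Y$ is a continuous composition, which is a plot of $DY$, so $G$ is smooth.

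For $T$, let $F \colon X \times \I \to Y$ be a smooth homotopy and set $H \colon TX \times [0,1] \to TY$ to have the same underlying function. The strategy is to write $TX \times [0,1]$ as a colimit of Euclidean pieces and verify continuity there. Since $X$ is the canonical colimit in $\Diff$ of its plots, $X = \colim_{P \colon U \to X} U$, and because $T$ is a left adjoint, $TX = \colim U$ in $\Top$. As $[0,1]$ is locally compact Hausdorff, $- \times [0,1]$ preserves colimits in $\Top$, giving $TX \times [0,1] = \colim_{P}(U \times [0,1])$. Specifying a continuous map $TX \times [0,1] \to TY$ therefore reduces to giving a compatible family of continuous maps $U \times [0,1] \to TY$. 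For each plot $P \colon U \to X$, the composite $F \circ (P \times \id_{\I}) \colon U \times \I \to Y$ is smooth, so $T$ sends it to a continuous map $T(U \times \I) \to TY$. To identify the source, I would use that $U \times -$ preserves colimits in $\Diff$ (by cartesian closedness) to write $U \times \I = (U \times \R)/(\id_U \times \lambda)$, apply $T$, and observe that $T(U \times \R) = U \times \R$ with its standard topology, while $\id_U \times \lambda$ is a quotient map in $\Top$ because $\lambda \colon \R \to [0,1]$ is a quotient and $U$ is locally compact Hausdorff. This yields $T(U \times \I) \cong U \times [0,1]$, producing the required compatible family and hence a continuous $H \colon TX \times [0,1] \to TY$ whose underlying function equals that of $F$, thereby giving a homotopy between $Tf_0$ and $Tf_1$.

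The main obstacle is the identification $T(U \times \I) \cong U \times [0,1]$, which encodes the interaction of $T$ with binary products against a locally compact factor. It rests on two ingredients: $T$ preserves colimits (being a left adjoint), and products with locally compact Hausdorff spaces preserve quotient maps in $\Top$. Once these are applied to the factors $U$, $\R$, and $[0,1]$ at play here, the rest of the argument is routine, and the compatibility checks across plot morphisms $V \to U$ follow from the associativity of $F \circ (P \times \id_{\I})$.
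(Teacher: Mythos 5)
Your argument is correct. For the functor $D$ it is essentially the paper's proof unwound to the level of plots: the paper simply observes that $D$, being a right adjoint, preserves products, so a continuous homotopy $Z\times[0,1]\to W$ gives $DZ\times D[0,1]=D(Z\times[0,1])\to DW$, and then composes with the smooth identity $\I\to D[0,1]$; your plotwise check is the same computation made explicit.

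For the functor $T$ your route is genuinely different. The paper passes to the adjoint $F\colon I\to\Cinfty(X,Y)$, applies $T$, composes with the natural map $T\Cinfty(X,Y)\to T\Cinfty(DTX,DTY)=\nu\smap(TX,TY)$ (the identification $D\smap(X,Y)=\Cinfty(DX,DY)$ from \cite{SYH}), recognizes the target as the exponential $TY^{TX}$ in the cartesian closed category $\NumG$, and then transposes back to obtain a continuous homotopy $TX\times TI\to TY$. You instead decompose $TX$ as the colimit of its plots (using that $T$ is a left adjoint), invoke the left-adjointness of $-\times[0,1]$ on $\Top$ to commute it past the colimit, and reduce to the local identification $T(U\times\I)\cong U\times[0,1]$, which you establish by presenting $U\times\I$ as the quotient $(U\times\R)/(\id_U\times\lambda)$ and applying the Whitehead fact that products with locally compact Hausdorff spaces preserve quotient maps. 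The paper's argument is shorter and more conceptual but leans on the enriched machinery of \cite{SYH} and the cartesian closedness of $\NumG$; yours is more self-contained, using only classical point-set topology and the universal property of colimits, at the cost of having to verify several compatibility and preservation steps explicitly. Both are valid, and it would be worth noting in your version that the compatibility of the family $\{U\times[0,1]\to TY\}$ over the plot category is indeed immediate from functoriality, as you assert, and that $U\times-$ preserving the subduction $\lambda$ (so that $U\times\I$ really is the indicated quotient) uses the cartesian closedness of $\Diff$ as you say.
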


\begin{proof}
  If $f \colon X \times I \to Y$ is a smooth homotopy between $f_0$
  and $f_1$ then we have a composition
  \[
  TI \xrightarrow{TF} T\Cinfty(X,Y) \to T\Cinfty(DTX,DTY) =
  \nu\smap(TX,TY) 
  \]
  where $F \colon I \to \Cinfty(X,Y)$ is the adjoint to $f$.  Since
  $\nu\smap(TX,TY) = TY^{TX}$ is the exponential object in $\NumG$, we
  obtain, by adjunction, a continuous homotopy
  $TX \times TI \cong TX \times [0,1] \to TY$ between $Tf_0$ and
  $Tu_1$.

  On the other hand, if $g \colon Z \times TI \to W$ is a continuous
  homotopy between $g_0$ and $g_1$ then the composition
  \[
  DZ \times I \to DZ \times DTI = D(Z \times TI) \xrightarrow{Dg} DW
  \]
  gives a smooth homotopy between $Dg_0$ and $Dg_1$.
\end{proof}

\begin{rmk}
  The inclusion $\STop \to \Diff$ also preserves homotopies, hence so
  does $D \colon \Top \to \Diff$.  But the situation is subtle for
  $\NumG \to \Top$, as it does not commute with product nor $\Top$ is
  not cartesian closed.
\end{rmk}

By \cite[Theorem~3.3]{Haraguchi}, $\NumG$ has a finitely generated
model structure that is Quillen equivalent to that of $\Top$ under the
adjunction $(i,\nu)$, where $i$ is the inclusion of $\NumG$ into
$\Top$ and $\nu = TD$ is the coreflection $\Top \to \NumG$.
Thus, there is a sequence of Quillen equivalences between model
categories
\[
\xymatrix{%
  \Diff \ar@<0.5ex>[r]^-{T} &
  \NumG \ar@<0.5ex>[l]^-{D} \ar@<0.5ex>[r]^-{i} &
  \Top. \ar@<0.5ex>[l]^-{\nu}
}%
\]

As a final note of the paper, we remark that there are non-cofibrant,
but still geometrically interesting, diffeological spaces $X$ which
have smooth homotopy groups not isomorphic to their continuous
homotopy groups.
Note that for such an $X$ the map $X \to DTX$ is not a weak
equivalence, and hence it does not have the smooth homotopy type of a
topological space.

To illustrate the situation, take the irrational torus
$\mathbb{T}_{\theta}$.  We have shown in Example~\ref{ex:irrational
  torus} that the smooth fundamental group of $\mathbb{T}_{\theta}$ is
isomorphic to $\mathbb{Z}^2$; while on the other hand,
$T\mathbb{T}_{\theta}$ has the trivial fundamental group because it is
an indiscrete topological space.  From this we observe the following.

\begin{prp}
  \label{prp:type of irrational torus}
  The natural map $\mathbb{T}_{\theta} \to DT\mathbb{T}_{\theta}$ is
  not a weak equivalence.  Moreover, $\mathbb{T}_{\theta}$ does not
  have the smooth homotopy type of a topological space.
\end{prp}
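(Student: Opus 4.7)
The plan is to leverage the indiscreteness of $T\mathbb{T}_\theta$---recalled in Example~\ref{ex:irrational torus}, and ultimately a consequence of the density of the $\R_\theta$-orbits in $\mathbb{T}^2$---to compute $\pi_n(DT\mathbb{T}_\theta)$, and then to deduce both assertions by an adjunction argument. For the first assertion, the key point is that since $T\mathbb{T}_\theta$ is indiscrete, every parameterization into $T\mathbb{T}_\theta$ is continuous and hence a plot of $DT\mathbb{T}_\theta$; consequently \emph{every} function from a diffeological space into $DT\mathbb{T}_\theta$ is automatically smooth. In particular, given any two maps of pairs $\phi_0,\phi_1 \colon (\D{n},\tS{n-1}) \to (DT\mathbb{T}_\theta,*)$, the piecewise function $H \colon \D{n} \times \I \to DT\mathbb{T}_\theta$ defined by $H(v,t)=*$ for $v \in \tS{n-1}$, $H(v,t)=\phi_0(v)$ for $t<1/2$, and $H(v,t)=\phi_1(v)$ for $t\geq 1/2$ is smooth and witnesses $[\phi_0]=[\phi_1]$, so $\pi_n(DT\mathbb{T}_\theta,*)=0$ for all $n\geq 1$. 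Combined with $\pi_1(\mathbb{T}_\theta)\cong\mathbb{Z}^2$ from Example~\ref{ex:irrational torus}, this shows the unit $\mathbb{T}_\theta \to DT\mathbb{T}_\theta$ is not bijective on $\pi_1$, and therefore is not a weak equivalence.

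For the second assertion I would argue by contradiction. Suppose $f \colon \mathbb{T}_\theta \to DY$ is a smooth homotopy equivalence for some $Y \in \Top$, with adjoint $\tilde f \colon T\mathbb{T}_\theta \to Y$. Naturality of the adjunction $(T,D,\varphi)$ gives a factorization
\[
  f = D\tilde f \circ \eta \colon \mathbb{T}_\theta \xrightarrow{\eta} DT\mathbb{T}_\theta \xrightarrow{D\tilde f} DY,
\]
where $\eta$ is the unit. Applying $\pi_1$ at a basepoint $x_0 \in \mathbb{T}_\theta$ and using the computation from the first step, $f_* = (D\tilde f)_* \circ \eta_*$ factors through $\pi_1(DT\mathbb{T}_\theta,\eta(x_0))=0$, so $f_*=0$; but $f_*$ must be an isomorphism $\mathbb{Z}^2 \to \pi_1(DY,f(x_0))$, a contradiction.

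The only nontrivial input is the indiscreteness of $T\mathbb{T}_\theta$, which is the main obstacle but is already granted in Example~\ref{ex:irrational torus}: any $\R_\theta$-saturated open subset of $\mathbb{T}^2$ is a union of dense orbits and hence is empty or all of $\mathbb{T}^2$. Once this is in hand, the proof is a diagram chase, and the observation that every function into $DT\mathbb{T}_\theta$ is smooth conveniently bypasses any need to analyze the interaction of $T$ with products such as $\D{n} \times \I$.
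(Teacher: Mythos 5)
Your proof is correct, and for the second assertion it takes a genuinely different route from the paper. The paper deduces the first assertion from the general fact that $\pi_n^{\text{smooth}}(DY) \cong \pi_n^{\text{top}}(Y)$ for any topological space $Y$; you instead compute $\pi_n(DT\mathbb{T}_\theta)=0$ directly from the observation that indiscreteness of $T\mathbb{T}_\theta$ forces \emph{every} set map into $DT\mathbb{T}_\theta$ to be smooth, so an arbitrary (even discontinuous) piecewise homotopy works. For the second assertion the divergence is more substantial: the paper supposes $DY \simeq \mathbb{T}_\theta$, then invokes three external facts --- that $D$ preserves homotopy groups, that $T$ preserves homotopy equivalences, and that the counit $TDY \to Y$ is a weak homotopy equivalence (citing \cite[Prop.~5.4]{SYH}) --- to transport $\pi_1 \cong \mathbb{Z}^2$ over to $TDY$ and contradict the triviality of $\pi_1(T\mathbb{T}_\theta)$. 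You replace all of that with a single use of the triangular identity $f = D\tilde f\circ\eta$: any smooth map $f\colon \mathbb{T}_\theta \to DY$ factors through the unit, hence $f_*$ factors through $\pi_1(DT\mathbb{T}_\theta)=0$ and cannot be injective on $\pi_1 \cong \mathbb{Z}^2$. This is more economical and self-contained --- it needs only the adjunction and your $\pi_n$ computation --- whereas the paper's argument leans on auxiliary preservation properties of $T$ and $D$. Both are valid; yours has the advantage of isolating exactly where the obstruction lives (the unit already kills $\pi_1$).
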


\begin{proof}
  For a topological space $Y$ the smooth fundamental group of $DY$ is
  isomorphic to the continuous fundamental group of $Y$.  In
  particular, by taking $Y = T\mathbb{T}_{\theta}$, we see that the
  smooth fundamental group of $DT\mathbb{T}_{\theta}$ is trivial.
  Therefore, $\mathbb{T}_{\theta} \to DT\mathbb{T}_{\theta}$ cannot be
  a weak homotopy equivalence.

  To prove the second assertion, suppose there is a homotopy
  equivalence $DY \simeq \mathbb{T}_{\theta}$ for some $Y \in \Top$.
  Since $D$ preserves homotopy groups, and since $TDY \to Y$ is a weak
  homotopy equivalence by \cite[Proposition~5.4]{SYH}, we have
  $\pi_1(TDY) \cong \mathbb{Z}^2$.
  But, on the other hand, we have $TDY \simeq T\mathbb{T}_{\theta}$
  because $T$ preserves homotopy equivalences (Proposition~\ref{prp:T
    and D preserve homotopies}).  Hence $TDY$ have the trivial
  fundamental group, in contradiction to the previous assertion that
  $TDY$ must have the non-trivial fundamental group.
\end{proof}


\begin{thebibliography}{10}

\bibitem{Brown} R.~Brown, \emph{Topology and groupoids}, BookSurge
  Publishing, 2006.

\bibitem{DGE} J.~Daniel Christensen, Gordon Sinnamon and Enxin Wu,
  \emph { The $D$-topology for diffeological spaces},
  Pacific Journal of Mathematics, \textbf{272} (2014), 87--110.

\bibitem{Spa} W.~G. Dwyer and J.~Spalinski, \emph{Homotopy theories
    and model categories}, Handbook of Algebraic Topology, Elsevier,
  1995, pp.~73--126.

\bibitem{Haraguchi} T.~Haraguchi, \emph{On model structure for
    coreflective subcategories of a model category}, 
    Math.~J.\ Okayama Univ., \textbf{57} (2015), 79--84.

\bibitem{Hov} M.~Hovey, \emph{Model categories}, Mathematical Surveys
  and Monographs, vol.~63, American Mathematical Society, Providence,
  RI, 1999.

\bibitem{Zem} P.~Iglesias-Zemmour, \emph{Diffeology}, Mathematical
  Surveys and Monographs, vol. 165, American Mathematical Society,
  Providence, RI, 2013.

\bibitem{Lee} J.~M. Lee, \emph{Introduction to smooth manifolds}, GTM,
  vol.~218, Springer, 2012.

\bibitem{JM} John~W. Milnor, \emph{Topology from the differentiable
    viewpoint}, revised reprint of the 1965 original ed., Princeton
  Landmarks in Mathematics, Princeton University Press, Princeton, NJ,
  1997, Based on notes by David W.  Weaver.

\bibitem{QuiH} D.~G. Quillen, \emph{Homotopical algebra}, SLNM,
  vol.~43, Springer, Berlin, 1967.

\bibitem{QuiR} \bysame, \emph{Rational homotopy theory}, Ann.~of
  Math., \textbf{90} (1969), 205--295.

\bibitem{Serre} J.-P. Serre, \emph{Homologie singuliere des espaces
    fibres: Applications}, Ann.~of Math., \textbf{54} (1951),
  425--505.

\bibitem{SYH} K.~Shimakawa, K.~Yoshida, and T.~Haraguchi,
  \emph{Homology and cohomology via enriched bifunctors}, Kyushu
  Journal of Mathematics, \textbf{72} (2018), 239--252.

\bibitem{Spanier} E.~H. Spanier, \emph{Algebraic topology},
  McGraw-Hill, New York, 1966.


\end{thebibliography}
\providecommand{\bysame}{\leavevmode\hbox to3em{\hrulefill}\thinspace}
\providecommand{\MR}{\relax\ifhmode\unskip\space\fi MR }
\providecommand{\MRhref}[2]{%
  \href{http://www.ams.org/mathscinet-getitem?mr=#1}{#2}
}
\providecommand{\href}[2]{#2}

\end{document}